\newtheorem{proposition}{Proposition}[section]
\newtheorem{lemma}[proposition]{Lemma}
\newtheorem{theorem}[proposition]{Theorem}
\newtheorem{definition}[proposition]{Definition}
\begin{document}

\begin{center}
\LARGE
\textbf{The Weisfeiler-Leman algorithm and the diameter of Schreier graphs}
\bigskip\bigskip

\large
Daniele Dona\footnote{The author was partially supported by the European Research Council under Programme H2020-EU.1.1., ERC Grant ID: 648329 (codename GRANT).}
\bigskip

\normalsize
Mathematisches Institut, Georg-August-Universit\"at G\"ottingen

Bunsenstra\ss e 3-5, 37073 G\"ottingen, Germany

\texttt{daniele.dona@mathematik.uni-goettingen.de}
\bigskip\bigskip\bigskip
\end{center}

\begin{minipage}{110mm}
\small
\textbf{Abstract.} We prove that the number of iterations taken by the Weisfeiler-Leman algorithm for configurations coming from Schreier graphs is closely linked to the diameter of the graphs themselves: an upper bound is found for general Schreier graphs, and a lower bound holds for particular cases, such as for Schreier graphs with $G=\mbox{SL}_{n}({\mathbb F}_{q})$ ($q>2$) acting on $k$-tuples of vectors in ${\mathbb F}_{q}^{n}$; moreover, an exact expression is found in the case of Cayley graphs.
\medskip

\textbf{Keywords.} Weisfeiler-Leman, Schreier graphs, Cayley graphs, diameter.
\medskip

\textbf{MSC2010.} 05C15, 05C25, 05C85, 20F65.
\end{minipage}
\bigskip

\section{Introduction}

The problem of determining the diameter of certain graphs related to finite groups has been extensively investigated in recent years, in the attempt to prove or disprove Babai's conjecture \cite{BS88} on the polylogarithmic bound of the diameter of $\mbox{Cay}(G,S)$ in terms of $|G|$. Recent developments include \cite{HS14} (for $G=\mbox{Sym}(n)$), \cite{BGT11} \cite{PS16} (for $G=SL_{n}({\mathbb F}_{q})$ with bounded $n$) and \cite{BY15} (for $G=SL_{n}({\mathbb F}_{q})$ with bounded $q$): a good part of current research in the area is concentrated towards achieving better bounds in the case of linear algebraic groups with large rank $n$.

Another core problem in graph theory is the question of how to efficiently determine whether two graphs are isomorphic and describe the set of isomorphisms from one to the other; a recent development in the area showed that it is possible to solve this problem in quasipolynomial time in the number of vertices \cite{Ba15} \cite{He17}. Some of the key ideas at the base of the result involve finding more and more refined structures in the target graphs, so that once asymmetry is found (or artificially introduced) the problem can be reduced to smaller sub-problems; one of the first techniques of this sort is the iterative algorithm developed by Weisfeiler and Leman \cite{WL68}, which refines particular colourings of graphs and encodes information about the structure of the graph itself in the new colouring.

The result we present here touches both areas. We work with Schreier graphs of finite groups, giving them a suitable colouring, and we apply the Weisfeiler-Leman algorithm to them: as it turns out, the number of iterations taken before stopping is tightly related to the diameter of the graphs themselves. An upper bound for the number of iterations is found in the case of general Schreier graphs, while a lower bound holds for some interesting particular cases, such as for Schreier graphs with $G=\mbox{SL}_{n}({\mathbb F}_{q})$ ($q>2$) acting on $k$-tuples of vectors in ${\mathbb F}_{q}^{n}$: moreover, this is independent from $n$ and $q$ when $\mbox{char}({\mathbb F}_{q})>2$. In the case of Cayley graphs, instead of bounds we will be able to find an exact expression for the number of iterations as a function of the diameter.

\begin{center}
***
\end{center}

Let $G$ be a finite group and let $S$ be a set of generators of $G$ such that $S=S^{-1}$ and $e\in S$: the \textit{Cayley graph} $\mbox{Cay}(G,S)$ is defined as the graph having $G$ as its set of vertices and $\{(g,sg)|g\in G, s\in S\}$ as its set of edges; since $S$ generates $G$, the graph is strongly connected: using the identity $(g'g^{-1})g=g'$, there exists a directed path from the vertex $g$ to the vertex $g'$ determined by those $s_{i}\in S$ such that $\prod_{i=1}^{m}s_{i}=g'g^{-1}$ (this path is not unique in general, of course).

Cayley graphs are special cases of a more general class of graphs. Again, let $G$ be a finite group and let $S$ be a set of generators of $G$ such that $S=S^{-1}$ and $e\in S$; let $V$ be a set on which $G$ acts transitively: the \textit{Schreier graph} $\mbox{Sch}(V,S)$ is defined as the graph having $V$ as its set of vertices and $\{(v,sv)|v\in V, s\in S\}$ as its set of edges (as it is defined, the Schreier graph is in fact a multigraph);  since $S$ generates $G$ and $G$ acts transitively on $V$, the graph is strongly connected: if $gv=v'$ for some $g\in G$, there exists a directed path from the vertex $v$ to the vertex $v'$ determined by those $s_{i}\in S$ such that $\prod_{i=1}^{m}s_{i}=g$. A Cayley graph is just a Schreier graph where $V=G$ and the action is the usual group multiplication (on the left in our definition, but it does not really matter).

The construction of these graphs and the choice of a symmetric set of generators containing $e$ allow us to see $\mbox{Cay}(G,S)$ and $\mbox{Sch}(V,S)$ as a different type of structure.

\begin{definition}
A (classical) configuration ${\mathfrak X}$ is a pair $(\Gamma, c:\Gamma^{2}\rightarrow {\mathcal C})$ ($\Gamma$ is a finite set of vertices, ${\mathcal C}$ is a finite set of colours) such that:
\begin{enumerate}[{ (i)}]
 \item for any $c\in {\mathcal C}$, if for some $v\in \Gamma$ we have $c(v,v)=c$, then for all $v_{1},v_{2}\in \Gamma$ such that $c(v_{1},v_{2})=c$ we have $v_{1}=v_{2}$;
 \item for any $c\in {\mathcal C}$ there exists a $c^{-1}\in {\mathcal C}$ such that for any $v_{1},v_{2}\in \Gamma$ with $c(v_{1},v_{2})=c$ we have $c(v_{2},v_{1})=c^{-1}$.
\end{enumerate}
\end{definition}

The word ``classical'' in the definition comes from the fact that a more general definition is often used, defining the colouring as $c:\Gamma^{k}\rightarrow {\mathcal C}$ and consequently with some differences in how to define conditions (i) and (ii). It is also to be noted that this is a ``weak'' version of the definition of configuration, as provided in Sun-Wilmes \cite{SW16} and Babai \cite{Ba15}, as opposed to the ``strong'' version that can be found in Helfgott \cite{He17}: in that paper, a stronger assumption on condition (ii) was necessary in order to prove properties of non-classical configurations that here are not needed. By property (i), in a configuration we can distinguish between \textit{vertex colours} (colours coming from $c(v,v)$) and \textit{edge colours} (colours coming from $c(v_{1},v_{2})$ with $v_{1}\neq v_{2}$): these names come from the natural observation that we can think of a configuration as a particular colouring of the complete graph of $|\Gamma|$ vertices, giving to $v$ the colour $c(v,v)$ and to the directed edge $(v_{1},v_{2})$ the colour $c(v_{1},v_{2})$ and noticing that a vertex and an edge will always have different colours in this situation.

There is a natural way to define a configuration ${\mathfrak X}_{C}$ from $\mbox{Cay}(G,S)$: $\Gamma$ can be defined to be the group $G$, while the colouring is given by $c(g_{1},g_{2})=g_{2}g_{1}^{-1}$ if $g_{2}g_{1}^{-1}\in S$ and $c(g_{1},g_{2})=\emptyset$ otherwise; in this case then ${\mathcal C}=S\cup\{\emptyset\}$ (or ${\mathcal C}=S$ in the trivial case $S=G$). ${\mathfrak X}_{C}$ is a configuration (in the weak sense): thanks to $e\in S$ the only vertex colour is $e$ and all the others are edge colours, while thanks to $S=S^{-1}$ the inverse of $s\in S$ as a colour is exactly $s^{-1}$ (and the inverse of $\emptyset$ is $\emptyset$); since we have only one vertex colour ${\mathfrak X}_{C}$ is also a configuration in the strong sense, but it does not make any difference.

In a similar fashion we can define a configuration ${\mathfrak X}_{S}$ from $\mbox{Sch}(V,S)$: $\Gamma$ can be defined to be the set $V$, while the colouring is given by $c(v_{1},v_{2})=\{s\in S|sv_{1}=v_{2}\}$; in this case then ${\mathcal C}\subseteq {\mathcal P}(S)$. ${\mathfrak X}_{S}$ is a classical configuration (in the weak sense, but not in the strong sense): to prove that it satisfies (i), notice that if for a colour $c$ we have $c(v,v)=c$ then $e\in c$, so that for any other two vertices $v_{1},v_{2}$ with $c(v_{1},v_{2})=c$ we have $v_{1}=v_{2}$ (in other words, vertex colours are exactly those who contain $e$); to prove that it satisfies (ii), observe that for all $c\in {\mathcal C}$ we have a natural definition $c^{-1}=\{s^{-1}|s\in c\}$, thanks to $S=S^{-1}$. If we see Cayley graphs as particular Schreier graphs, the configurations ${\mathfrak X}_{C}$ and ${\mathfrak X}_{S}$ built on the same $\mbox{Cay}(G,S)$ are clearly isomorphic, with each colour $s\neq\emptyset$ in ${\mathfrak X}_{C}$ corresponding to $\{s\}$ in ${\mathfrak X}_{S}$.

As mentioned before, we now introduce a more refined type of structure.

\begin{definition}
A (classical) coherent configuration is a pair ${\mathfrak X}=(\Gamma, c:\Gamma^{2}\rightarrow{\mathcal C})$ that satisfies (i) and (ii) and such that:
\begin{itemize}
 \item[\ (iii)] for every $c_{0},c_{1},c_{2}\in {\mathcal C}$ there is a constant $\gamma=\gamma(c_{0},c_{1},c_{2})\in {\mathbb N}$ such that for every $v_{1},v_{2}\in \Gamma$ with $c(v_{1},v_{2})=c_{0}$ the number of $w\in \Gamma$ with $c(v_{1},w)=c_{1}$ and $c(w,v_{2})=c_{2}$ is $\gamma$ (independently from the choice of $v_{1},v_{2}$).
\end{itemize}
\end{definition}

The colouring of a coherent configuration contains much more information about its structure than the one coming from a usual configuration. Especially important to us is the following result:

\begin{proposition}[\cite{He17}, Exercise 2.16(a)]\label{Prwalkcol}
Let ${\mathfrak X}=(\Gamma, c:\Gamma^{2}\rightarrow{\mathcal C})$ be a coherent configuration, and let $c_{0},c_{1},...,c_{k}$ be a sequence of colours with $k\geq 2$. Then there is a constant $\gamma=\gamma(c_{0},c_{1},...,c_{k})\in {\mathbb N}$ such that for every $v_{1},v_{2}\in \Gamma$ with $c(v_{1},v_{2})=c_{0}$ the number of $k$-tuples $(w_{1},...,w_{k-1})\in \Gamma^{k}$ with $c(v_{1},w_{1})=c_{1}$, $c(w_{i-1},w_{i})=c_{i}$ for all $1<i<k$ and $c(w_{k-1},v_{2})=c_{k}$ is $\gamma$ (independently from the choice of $v_{1},v_{2}$).
\end{proposition}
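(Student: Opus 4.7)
The plan is to proceed by induction on $k$, the length of the walk, taking as base case $k=2$, which is exactly axiom (iii) of a coherent configuration. The inductive idea is the natural one: a walk of length $k$ from $v_{1}$ to $v_{2}$ splits at its first intermediate vertex $w_{1}$ into a single edge from $v_{1}$ to $w_{1}$ (of colour $c_{1}$) plus a walk of length $k-1$ from $w_{1}$ to $v_{2}$ (with colour pattern $c_{2},\ldots,c_{k}$). Grouping such walks by the colour of the ``chord'' $(w_{1},v_{2})$ lets us use axiom (iii) for the first factor and induction for the second.

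More concretely, for $k\geq 3$ fix $v_{1},v_{2}$ with $c(v_{1},v_{2})=c_{0}$, and for each $c'\in{\mathcal C}$ consider those $w_{1}\in\Gamma$ satisfying $c(v_{1},w_{1})=c_{1}$ and $c(w_{1},v_{2})=c'$: by axiom (iii) there are exactly $\gamma(c_{0},c_{1},c')$ of them, independent of $v_{1},v_{2}$. For each such $w_{1}$, the inductive hypothesis applied to the sequence $(c',c_{2},\ldots,c_{k})$ (of length $k-1\geq 2$) and the pair $(w_{1},v_{2})$ gives exactly $\gamma(c',c_{2},\ldots,c_{k})$ tuples $(w_{2},\ldots,w_{k-1})$ completing the walk, independent of $w_{1}$ and $v_{2}$. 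Summing over $c'\in{\mathcal C}$ (a finite sum, since ${\mathcal C}$ is finite) yields the constant
\[
\gamma(c_{0},c_{1},\ldots,c_{k})\;=\;\sum_{c'\in{\mathcal C}}\gamma(c_{0},c_{1},c')\cdot\gamma(c',c_{2},\ldots,c_{k}),
\]
which depends only on the colour sequence, as required.

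The argument is essentially mechanical and I do not expect a real obstacle: the only bookkeeping point is that the induction step requires the truncated sequence $(c',c_{2},\ldots,c_{k})$ to have length at least $2$, which is precisely why the hypothesis $k\geq 2$ appears in the statement and the base case sits at $k=2$ rather than $k=1$. The resulting recursive formula above is also the natural one, mirroring matrix multiplication of the intersection numbers $\gamma(c_{0},c_{1},c_{2})$ viewed as a tensor, which gives a conceptual reassurance that the induction is set up correctly.
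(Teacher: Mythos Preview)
Your proof is correct and follows essentially the same inductive scheme as the paper: base case $k=2$ is axiom (iii), and the inductive step sums over the colour $c'$ of a chord to factor the count into a length-$2$ piece and a shorter walk. The only cosmetic difference is that the paper splits off the \emph{last} two steps (grouping by $c'=c(w_{k-1},v_{2})$ and using a triangle $c',c_{k},c_{k+1}$), whereas you split off the \emph{first} step; the resulting recursion is the same up to this symmetry.
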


So an edge colour $c(v_{1},v_{2})$ in a coherent configuration not only knows by definition about colourings of triangles $v_{1},w,v_{2}$, but knows also about colourings of walks $v_{1},w_{1},...,w_{k-1},v_{2}$ of any length.

\begin{proof}
We proceed by induction on $k$: for $k=2$ the statement is exactly condition (iii) of coherent configurations, so there is nothing to prove.

Suppose now that this is true for $k$. We are given $v_{1},v_{2}$ with $c(v_{1},v_{2})=c_{0}$ and we have to find the number of walks of colours $c_{1},c_{2},...,c_{k+1}$ from $v_{1}$ to $v_{2}$: such a walk however is merely the composition of two walks $c_{1},c_{2},...,c_{k-1}$ and $c_{k},c_{k+1}$, so we can just consider any walk $c_{1},c_{2},...,c_{k-1},c'$ of length $k$ from $v_{1}$ to $v_{2}$ (for all $c'$) and for each of them any triangle of colours $c',c_{k},c_{k+1}$ built on $(w_{k-2},v_{2})$ of colour $c'$; the composition of these two structures will give us the desired walk of length $k+1$. The constants $\gamma$ for walks of length $2$ and $k$ are independent from the choice of initial vertices, thus the same will occur for $k+1$:

\begin{equation*}
\gamma(c_{0},c_{1},...,c_{k},c_{k+1})=\sum_{c'\in {\mathcal C}}\gamma(c_{0},c_{1},...,c_{k-1},c')\gamma(c',c_{k},c_{k+1})
\end{equation*}
\end{proof}

A configuration, and in particular the configurations ${\mathfrak X}_{C},{\mathfrak X}_{S}$ that we are going to study, is not necessarily coherent. There is a natural way to refine a configuration into a coherent configuration, through the algorithm devised by Weisfeiler and Leman in \cite{WL68}.

We first define ${\mathcal C}^{(0)}={\mathcal C}$ as the colour set at the $0$-th iteration; we can define ${\mathcal C}^{(h+1)}$ from ${\mathcal C}^{(h)}$ by calling $c^{(h+1)}(v_{1},v_{2})$ the tuple: 

\begin{equation}\label{eqwl}
\left(c^{(h)}(v_{1},v_{2}), \left(\left|\left\{w\in V|c^{(h)}(v_{1},w)=c_{1},c^{(h)}(w,v_{2})=c_{2}\right\}\right|\right)_{c_{1},c_{2}\in {\mathcal C^{(h)}}}\right)
\end{equation}

The colouring $c^{(h+1)}$ is more refined than $c^{(h)}$; notice that $c^{(h+1)}(v_{1},v_{2})$ contains as information, for each choice of $c_{1},c_{2}\in {\mathcal C}^{(h)}$, the number of vertices $w$ as in condition (iii) (which is not yet independent from the choice of $v_{1},v_{2}$). Once we reach an iteration where there is no refinement, it means that all these numbers are the same for each pair $(v_{1},v_{2})$ with the same colour, i.e. the configuration has become coherent: the Weisfeiler-Leman algorithm stops at this point.

One last observation is necessary with regard to Weisfeiler-Leman.

\begin{proposition}\label{Prisowl}
Define ${\mathfrak X}^{(h)}$ as the configuration at the $h$-th step of Weisfeiler-Leman. Then $\mbox{\normalfont Aut}({\mathfrak X}^{(h)})=\mbox{\normalfont Aut}({\mathfrak X}^{(h+1)})$.
\end{proposition}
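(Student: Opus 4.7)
The plan is to prove the two inclusions $\mbox{\normalfont Aut}({\mathfrak X}^{(h+1)})\subseteq \mbox{\normalfont Aut}({\mathfrak X}^{(h)})$ and $\mbox{\normalfont Aut}({\mathfrak X}^{(h)})\subseteq \mbox{\normalfont Aut}({\mathfrak X}^{(h+1)})$ separately, using only the explicit refinement rule (\ref{eqwl}) together with the interpretation of an automorphism of a configuration as a permutation of $\Gamma$ preserving the partition of $\Gamma^{2}$ induced by the colouring (so that the relabelling of colours at each iteration of the algorithm causes no trouble).

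The first inclusion is essentially by inspection: by (\ref{eqwl}), the colour $c^{(h+1)}(v_{1},v_{2})$ is a tuple whose first coordinate is exactly $c^{(h)}(v_{1},v_{2})$, so the partition of $\Gamma^{2}$ induced by $c^{(h+1)}$ refines the one induced by $c^{(h)}$, and any permutation preserving the finer partition preserves the coarser one.

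For the reverse inclusion I would fix $\sigma\in\mbox{\normalfont Aut}({\mathfrak X}^{(h)})$ and verify that $c^{(h+1)}(\sigma v_{1},\sigma v_{2})$ and $c^{(h+1)}(v_{1},v_{2})$ agree coordinate by coordinate. The first coordinate matches by the assumption on $\sigma$. For the remaining coordinates the key step is a reindexing: writing $w=\sigma w'$ in the set appearing in (\ref{eqwl}) and exploiting $c^{(h)}(\sigma v_{1},\sigma w')=c^{(h)}(v_{1},w')$ and $c^{(h)}(\sigma w',\sigma v_{2})=c^{(h)}(w',v_{2})$, the count associated to any pair $(c_{1},c_{2})\in({\mathcal C}^{(h)})^{2}$ at $(\sigma v_{1},\sigma v_{2})$ coincides with the one at $(v_{1},v_{2})$, so the whole tuple matches and $\sigma\in\mbox{\normalfont Aut}({\mathfrak X}^{(h+1)})$.

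I do not anticipate any real obstacle: the statement reduces, once the automorphism condition is correctly interpreted, to a purely formal consequence of (\ref{eqwl}) and the bijectivity of $\sigma$. The only subtle point is being consistent about what one means by ``$\sigma$ preserves $c^{(h)}$'', since the colour set ${\mathcal C}^{(h)}$ itself changes with $h$; phrasing everything in terms of partitions of $\Gamma^{2}$ sidesteps this and makes both inclusions immediate.
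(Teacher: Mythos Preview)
Your proposal is correct and follows essentially the same route as the paper: one inclusion comes from the refinement in (\ref{eqwl}), and the other from reindexing $w\mapsto\sigma(w)$ and using that $\sigma$ preserves $c^{(h)}$. Your remark about phrasing automorphisms as partition-preserving permutations is a welcome clarification but does not change the argument.
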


\begin{proof}
As $c^{(h+1)}$ is a refinement of $c^{(h)}$, we have already $\mbox{Aut}({\mathfrak X}^{(h)})\supseteq \mbox{Aut}({\mathfrak X}^{(h+1)})$. On the other side, if $\sigma\in \mbox{Aut}({\mathfrak X}^{(h)})$, then for any pair $v_{1},v_{2}\in V$ and any pair $c_{1},c_{2}\in {\mathcal C}^{(h)}$ each vertex $w$ with $(c(v_{1},w),c(w,v_{2}))=(c_{1},c_{2})$ is sent to a vertex $\sigma(w)$ such that $(c(\sigma(v_{1}),\sigma(w)),c(\sigma(w),\sigma(v_{2})))=(c_{1},c_{2})$: this implies that the numbers in (\ref{eqwl}) are also preserved by $\sigma$, therefore $\sigma\in \mbox{Aut}({\mathfrak X}^{(h+1)})$ too.
\end{proof}

This explains why Weisfeiler-Leman is so interesting in the context of the Graph Isomorphism Problem: after using the algorithm we have a more refined colouring, which means that we have more possibility to exploit the subtle differences between two graphs, but at the same time the algorithm is designed to preserve all isomorphisms, so that when we prove that a certain $\sigma$ is not an isomorphism for the final coherent configurations then we have proved that it is also not an isomorphism for the original graphs.

We state now our main results.

\begin{theorem}\label{Thupper}
Let $G$ be a finite group and let $S$ be a set of generators of $G$ with $e\in S=S^{-1}$. Suppose that $G$ acts transitively on a set $V$, and consider the configuration ${\mathfrak X}_{S}$ coming from $\mbox{\normalfont Sch}(V,S)$; then the number $WL({\mathfrak X}_{S})$ of nontrivial iterations of the Weisfeiler-Leman algorithm satisfies:
\begin{equation*}
WL({\mathfrak X}_{S}) \leq \log_{2}\mbox{{\normalfont diam Sch}}(V,S)+3
\end{equation*}
\end{theorem}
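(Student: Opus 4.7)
The plan is a doubling argument: I would show by induction on $h$ that the color $c^{(h)}(v_{1},v_{2})$ encodes walk-count information of length up to $2^{h}$ in $\mbox{Sch}(V,S)$, and then observe that once $2^{h}$ exceeds the diameter $D$, no further refinement can occur.

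First I would prove by induction on $h$ the following doubling lemma: for every $k\leq 2^{h}$ and every sequence $(c_{1},\ldots,c_{k})\in ({\mathcal C}^{(0)})^{k}$, the color $c^{(h)}(v_{1},v_{2})$ determines the number of walks $v_{1}=w_{0},w_{1},\ldots,w_{k}=v_{2}$ with $c^{(0)}(w_{i-1},w_{i})=c_{i}$ for every $i$. The base case $h=0$ is immediate, since $c^{(0)}(v_{1},v_{2})$ is itself the color of the ``walk'' of length $1$ between the endpoints. For the inductive step I split any walk of length $k\leq 2^{h+1}$ at its midpoint $m=\lfloor k/2\rfloor$ into two sub-walks, each of length at most $2^{h}$; by the inductive hypothesis the count of walks from $v_{1}$ to an intermediate vertex $w$ (respectively from $w$ to $v_{2}$) realizing the prescribed color subsequence is a function only of $c^{(h)}(v_{1},w)$ (respectively of $c^{(h)}(w,v_{2})$), and the joint distribution of these two level-$h$ colors as $w$ ranges over $V$ is precisely the data that (\ref{eqwl}) records inside $c^{(h+1)}(v_{1},v_{2})$. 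Summing the product of the two half-counts weighted by those multiplicities recovers the total walk count.

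Next I would pass from walk-count saturation to WL-stabilization. Once $h\geq \lceil\log_{2}D\rceil$, every pair of vertices of $\mbox{Sch}(V,S)$ is joined by some walk of length $\leq D\leq 2^{h}$, so the walk-count data carried by $c^{(h)}(v_{1},v_{2})$ captures the full walk structure of the graph relative to the pair $(v_{1},v_{2})$. From this I would argue that the joint counts $|\{w:c^{(h)}(v_{1},w)=C_{1},\,c^{(h)}(w,v_{2})=C_{2}\}|$ governing the next iteration of (\ref{eqwl}) are themselves determined by the same walk data, so after at most a constant number of additional WL-iterations no new refinement is produced and the configuration has become coherent; together with the rounding of $\log_{2}D$ up to the next integer, these transitional steps give the stated bound $\log_{2}D+3$.

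The main obstacle is exactly this last transition. Going from ``the color determines all walk-counts of length $\leq 2^{h}$'' to ``(\ref{eqwl}) yields no further refinement'' is not automatic: one has to check that the joint intermediate-vertex counts can be reconstructed from the walk-count profile of $(v_{1},v_{2})$, using the fact that the initial colors $c^{(0)}$ record the full action of $S$ and not merely adjacency. Pinning down the precise additive constant $3$ rather than an abstract $O(1)$ will require a careful accounting of how many extra iterations are consumed in this reconstruction, and this is where I expect the real technical work to lie.
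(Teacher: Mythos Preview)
Your doubling lemma is essentially the paper's Lemma~\ref{Leknowwalk}, and your overall plan matches the paper's. You correctly locate the real difficulty in the final transition from ``colours know all walks of length $\leq 2^{h}$'' to ``the refinement has stabilised'', but you do not supply the idea that closes it.

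The paper does \emph{not} try to show directly that the joint counts in (\ref{eqwl}) are functions of the walk profile. Instead it argues via automorphisms. The crucial Schreier-specific fact is that every non-$\emptyset$ colour $c$ is ``nonempty'' in the sense that for each $v$ there is at most one $w$ with $c(v,w)=c$; hence a walk from $v$ with a prescribed colour sequence is \emph{unique} when it exists, so your walk counts are all $0$ or $1$. This uniqueness is what powers the paper's reconstruction lemma (Lemma~\ref{Lereconst}): the collection of colour-sequences of walks from $v$, together with the equivalence relation ``same endpoint'', determines the isomorphism type of the ball around $v$ and a canonical bijection to any other ball carrying the same data. Once $2^{k}$ exceeds the diameter, that ball is the whole graph; one more iteration pushes this data into the vertex colour $c^{(k+1)}(v,v)$, and a second pushes it into every edge colour $c^{(k+2)}(v_{1},v_{2})$. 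Two pairs sharing the same colour at step $k+2$ are then related by an honest automorphism of $\mathfrak{X}_{S}$, and Proposition~\ref{Prisowl} forces stabilisation; this accounting of $k$ plus two extra steps is exactly where the constant $3$ comes from. Your hint that ``the initial colours record the full action of $S$'' is pointing at this uniqueness property, but without exploiting it structurally the transition cannot be completed: for general configurations, knowing all walk counts of bounded length does not determine the stable colouring.
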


where by counting nontrivial iterations we merely want to ignore the last one with no colour refinement.

Together with this upper bound, lower bounds also hold in some more limited but still very interesting cases. The scope of the lower bound is explicitly stated later (see Theorem~\ref{Thlowerg}), but here we specialize it to a situation that is relevant in the context of Babai's conjecture.

\begin{theorem}\label{ThlowerS}
Let $G=SL_{n}({\mathbb F}_{q})$ with $q>2$ and let $S$ be a set of generators of $G$ with $I_{n}\in S=S^{-1}$; for any $0<k<n$, let $V$ be the set of linearly independent $k$-tuples of vectors of ${\mathbb F}_{q}^{n}$, with the action of $G$ on $V$ defined as $A(v_{1},...,v_{k})=(Av_{1},...,Av_{k})$. Consider the configuration ${\mathfrak X}_{S}$ coming from $\mbox{\normalfont Sch}(V,S)$; then, if $p$ is the smallest prime such that $p|(q-1)$, the number $WL({\mathfrak X}_{S})$ of nontrivial iterations of the Weisfeiler-Leman algorithm satisfies:
\begin{equation*}
WL({\mathfrak X}_{S}) \geq \log_{2}\mbox{{\normalfont diam Sch}}(V,S)-\log_{2}(p-1)-3
\end{equation*}
\end{theorem}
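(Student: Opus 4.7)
The plan proceeds via a general ``walk-encoding'' analysis of Weisfeiler--Leman iterations, followed by a specific construction in the $SL_n(\mathbb{F}_q)$-Schreier setting. First I would establish, by induction on $h$, that the colour $c^{(h)}(v_1,v_2)$ encodes the number of walks of each length $\ell \leq 2^{h}$ from $v_1$ to $v_2$ with each prescribed sequence of edge colours. The base $h=1$ is immediate from the definition of $c^{(1)}$; the inductive step uses that any walk of length $\leq 2^{h+1}$ is the concatenation of two walks of length $\leq 2^{h}$, so the triangle-counts assembled in $c^{(h+1)}$ can be unfolded into walk-counts already carried by $c^{(h)}$. This is essentially the dyadic analogue of Proposition~\ref{Prwalkcol}.

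With this encoding in hand, the lower bound becomes a contrapositive. Suppose $c^{(h)}$ is already coherent. By Proposition~\ref{Prwalkcol}, the walk-counts between any two vertices at any length are fully determined by $c^{(h)}$, and hence by the walks of length at most $2^{h}$. The target is to prove $\mbox{diam Sch}(V,S) \leq 8(p-1)\cdot 2^{h}$, which rearranges to the claimed inequality for $WL(\mathfrak{X}_S)$. The mechanism would be: given any pair $(v_1,v_2)$ at distance $D$ close to the diameter, exhibit a second pair $(v_1',v_2')$ at substantially smaller distance whose short walk-profile (up to length of order $D/(p-1)$) coincides with that of $(v_1,v_2)$. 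Once $D > 8(p-1)\cdot 2^{h}$, the colour $c^{(h)}$ cannot separate these two pairs while still recording the distance, contradicting coherence.

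The hard part is the construction in the specific case $G = SL_n(\mathbb{F}_q)$ with $V$ the set of linearly independent $k$-tuples and $k < n$. Here the orbit of a $k$-tuple under $GL_n(\mathbb{F}_q)$ decomposes into $SL_n(\mathbb{F}_q)$-orbits indexed by the quotient $GL_n/SL_n \cong \mathbb{F}_q^{\times}$, a cyclic group of order $q-1$; the condition $p \mid (q-1)$ isolates its smallest prime-quotient, and the factor $p-1$ counts the nontrivial residues modulo $p$ that need to be matched when one trades Schreier-distance for a scalar adjustment within this quotient. Making this quantitative -- exhibiting pairs whose distances differ by more than a factor of $p-1$ but whose short walk-counts coincide -- is the technical core, and likely the step that needs detailed information on orbits of $SL_n(\mathbb{F}_q)$ on linearly independent $k$-tuples and on the interaction of the generating set $S$ with these orbits.

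The additive $3$ should absorb the low-order constants arising when one translates between length-in-$c^{(h)}$ and length-in-$S$-words, and in particular should match (up to the $\log_2(p-1)$ slack) the $+3$ appearing in the upper bound of Theorem~\ref{Thupper}; this consistency provides a useful sanity check on the construction. I expect the specialisation to $\mbox{char}(\mathbb{F}_q)>2$ mentioned in the introduction to correspond to the case $p=2$, in which the slack $\log_2(p-1)$ vanishes and the lower and upper bounds differ by the absolute constant $6$.
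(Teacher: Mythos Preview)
Your proposal has a genuine gap: you have not identified the mechanism that actually drives the lower bound, and the ``hard part'' you describe is a misdirection.

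The first step---that $c^{(h)}$ encodes walks of length $\leq 2^h$---is correct (this is Lemma~\ref{Leknowwalk}), but it is used in the paper for the \emph{upper} bound. For the lower bound one needs the converse: an explicit family of pairs that $c^{(h)}$ \emph{cannot} distinguish. Your contrapositive sketch (``exhibit a second pair at substantially smaller distance whose short walk-profile coincides'') is the right shape, but you never say how to produce such pairs, and your suggestion that this requires ``detailed information on orbits of $SL_n(\mathbb{F}_q)$ on linearly independent $k$-tuples'' is off the mark: $SL_n(\mathbb{F}_q)$ acts transitively on $V$ for $k<n$, so there is only one orbit, and the quotient $GL_n/SL_n\cong\mathbb{F}_q^{\times}$ does not index $SL_n$-orbits on $V$.

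What the paper actually uses is that scalar multiplication by $h\in\mathbb{F}_q^{\times}$ is a fixed-point-free automorphism $\varphi_h$ of the configuration $\mathfrak{X}_S$ (since $A(hv)=h(Av)$ and $hv\neq v$ for $h\neq 1$). The key lemma (Lemma~\ref{Lew-wequal}) then shows, by an inductive bijection argument on the intermediate vertex in the WL triangle count, that if $d(v,\varphi^i(w))>2^k$ for \emph{all} $i$ then all the colours $c^{(k)}(v,\varphi^i(w))$ coincide. Since at termination these must be distinct (by the walk-uniqueness you mention), every $w$ has some $\varphi^i(w)$ within $2^k$ of $v$; a short triangle-inequality chain through successive powers of $\varphi$ then bounds the diameter by $2^{k+2}(|\langle\varphi\rangle|-1)$. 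Taking $\varphi$ of prime order $p\mid(q-1)$ gives the factor $p-1$. No orbit analysis beyond the single observation ``scalar multiplication is a fixed-point-free automorphism'' is needed, and the constant $3$ comes from the $2^{k+2}$ in the chain, not from any matching with the upper bound.
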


Notice that this result does not depend on $n$, and we have dependence on $q$ only when $\mbox{char}({\mathbb F}_{q})=2$ (since otherwise $p=2$ and $\log_{2}(p-1)=0$).

Finally, we state a result that gives an exact expression for the number of iterations for any Cayley graph.

\begin{theorem}\label{ThCay}
Let $G$ be a finite group and let $S$ be a set of generators of $G$ with $e\in S=S^{-1}$. Consider the configuration ${\mathfrak X}_{C}$ coming from $\mbox{\normalfont Cay}(G,S)$; then the number $WL({\mathfrak X}_{C})$ of nontrivial iterations of the Weisfeiler-Leman algorithm satisfies:
\begin{equation*}
WL({\mathfrak X}_{C}) = \begin{cases}
 \lceil \log_{2}(\mbox{{\normalfont diam Cay}}(G,S)-1) \rceil & \mbox{ \ if \ }\forall g\exists! g'\ \mbox{with } d(g,g')\mbox{ the diameter} \\
 \lceil \log_{2}\mbox{{\normalfont diam Cay}}(G,S) \rceil & \mbox{ \ otherwise}
\end{cases}
\end{equation*}
\end{theorem}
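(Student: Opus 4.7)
The plan is to reduce everything to a sharp statement about the partition of $G$ induced by the Weisfeiler-Leman coloring. First note that right-multiplication $g\mapsto gh$ by any $h\in G$ is a color-preserving automorphism of $\mathfrak{X}_C$, since $(g_2 h)(g_1 h)^{-1}=g_2 g_1^{-1}$; by Proposition~\ref{Prisowl} this symmetry propagates through every iteration, so $c^{(\ell)}(g_1,g_2)$ depends only on $g_2g_1^{-1}$. Writing $f_\ell(g):=c^{(\ell)}(e,g)$ and $d(g):=d_{\mathrm{Cay}}(e,g)$, we then have $c^{(\ell)}(w,g)=f_\ell(gw^{-1})$, and the partition of $G^2$ induced by $c^{(\ell)}$ is completely determined by the partition of $G$ induced by $f_\ell$.

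The central claim I intend to prove by induction on $h\geq0$ is: $f_h(g)=f_h(g')$ if and only if either $g=g'$ or both $d(g),d(g')>2^h$. Equivalently, the $f_h$-partition of $G$ consists of singletons $\{g\}$ for each $g$ with $d(g)\leq2^h$, together with a single ``far'' class $F_h:=\{g:d(g)>2^h\}$ (omitted if empty). The base case $h=0$ is immediate from the definition of $\mathfrak{X}_C$, since each $s\in S$ produces its own edge color while all elements at distance $\geq 2$ are glued to $\emptyset$.

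For the inductive step I split into two tasks. In the \emph{distinguishing} task, assume $g\neq g'$ are not already separated by $f_h$ (so both lie in $F_h$) and that $d(g)\leq 2^{h+1}$: pick $w$ on a geodesic from $e$ to $g$ at position $\lceil d(g)/2\rceil$, so that $d(w)$ and $d(gw^{-1})$ are both $\leq 2^h$ and both $f_h(w),f_h(gw^{-1})$ are singleton colors by the inductive hypothesis. The WL refinement count $|\{u:f_h(u)=f_h(w),\,f_h(gu^{-1})=f_h(gw^{-1})\}|$ then equals $1$ for $g$ (take $u=w$) and $0$ for $g'$ (the only candidate $u=w$ would force $g'w^{-1}=gw^{-1}$, hence $g=g'$), so $f_{h+1}$ separates them. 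In the \emph{lumping} task, assume $d(g),d(g')>2^{h+1}$: for every pair $(c_1,c_2)$ of $f_h$-colors I must check that $|\{u:f_h(u)=c_1,\,f_h(gu^{-1})=c_2\}|$ is the same for $g$ and $g'$. The case analysis on whether each $c_i$ is singleton or the far color is short: whenever $c_i$ pins down an element $u_i$ with $d(u_i)\leq 2^h$, the triangle inequality $d(gu_i^{-1})\geq d(g)-d(u_i)>2^{h+1}-2^h=2^h$ forces the relevant point into $F_h$; and in the ``both far'' case the map $b\mapsto b^{-1}g$ gives a bijection between $B(e,2^h)$ and $\{u\in F_h:gu^{-1}\in B(e,2^h)\}$, so the count is $|F_h|-|B(e,2^h)|$, independent of $g$. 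This closes the induction.

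Given the claim, $WL(\mathfrak{X}_C)$ is the smallest $h\geq 0$ for which the $f_h$- and $f_{h+1}$-partitions coincide; by the explicit form of these partitions this happens exactly when $|F_h|\leq1$. Since prefixes of a geodesic to a diameter vertex produce an element at every distance $0,1,\ldots,D$, we have $|F_h|\geq D-2^h$ whenever $D>2^h$: therefore $|F_h|=0$ is equivalent to $D\leq 2^h$, while $|F_h|=1$ requires $D=2^h+1$ together with a unique element at distance $D$. The smallest $h$ achieving the first condition is $\lceil\log_2 D\rceil$, and the smallest achieving the second (when available) is $\lceil\log_2(D-1)\rceil$; comparing yields exactly the two branches of the theorem statement. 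The main obstacle I expect is the bookkeeping inside the lumping case; everything else is essentially formal once the partition characterization is in place.
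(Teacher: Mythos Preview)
Your proposal is correct, and the underlying mechanism is the same as in the paper: at step $h$ the colouring separates exactly the elements at distance $\le 2^h$ from one another while lumping together everything farther away. The paper, however, organizes this into two separate lemmas. For the upper bound it invokes the general walk-learning Lemma~\ref{Leknowwalk} (colours after $k$ steps know all walks of length $\le 2^k$), whereas you give a direct midpoint-of-a-geodesic argument that is self-contained for the Cayley case. For the lower bound the paper's Lemma~\ref{Lehh'equal} builds an explicit bijection $\sigma$ in three pieces (identity on the region far from both $h,h'$, right translation by $h^{-1}h'$ near $h$, and an arbitrary matching on the remainder) and then invokes vertex-transitivity to pass from ``same first vertex'' to ``arbitrary pair''; your lumping case accomplishes the same count by a short case split on whether each of $c_1,c_2$ is a singleton or the far colour, using the triangle inequality and the bijection $b\mapsto b^{-1}g$.

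The main difference is structural: you front-load the right-translation symmetry to reduce everything to the single function $f_h$ on $G$, and then prove one sharp inductive statement that simultaneously gives both inequalities. This yields a somewhat cleaner deduction of the exact stopping time (your $|F_h|\le 1$ criterion is exactly the condition the paper reaches at the end, but you get there without the detour through Lemma~\ref{Leknowwalk}). Conversely, the paper's route reuses machinery already developed for the general Schreier case, so it is shorter given that context. Substantively the two arguments are equivalent.
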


We remark that both Theorem~\ref{Thupper} and a bound analogous to the one in Theorem~\ref{ThlowerS} would hold for general Cayley graphs, although as expected they are weaker than the theorem above.

\section{The upper bound}
We first prove the upper bound in Theorem~\ref{Thupper}; in fact, we prove the same upper bound for a more general class of configurations, of which the configurations ${\mathfrak X}_{C},{\mathfrak X}_{S}$ that we defined from $\mbox{Cay}(G,S)$ and $\mbox{Sch}(V,S)$ are just particular cases.

\begin{theorem}\label{Thupperg}
Let ${\mathfrak X}$ be a configuration; call an edge colour $c\in {\mathcal C}$ nonempty if for every $v\in \Gamma$ there is at most one $w\in \Gamma$ with $c(v,w)=c$, and call it empty otherwise. Suppose that the coloured graph $\Gamma_{{\mathfrak X}}=(\Gamma,\{(v_{1},v_{2})\in \Gamma^{2}|c(v_{1},v_{2}) \mbox{ nonempty}\})$ is connected. Then the number $WL({\mathfrak X})$ of nontrivial iterations of the Weisfeiler-Leman algorithm satisfies:
\begin{equation*}
WL({\mathfrak X})\leq \log_{2}\mbox{{\normalfont diam }}\Gamma_{{\mathfrak X}}+3
\end{equation*}
\end{theorem}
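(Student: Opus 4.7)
The plan is to induct on $h$ proving the following invariant: whenever $c^{(h)}(v_{1},v_{2})=c^{(h)}(v_{1}',v_{2}')$, either $d(v_{1},v_{2})=d(v_{1}',v_{2}')$ or both distances are greater than $2^{h}$, where $d$ denotes the graph distance in $\Gamma_{\mathfrak{X}}$. Informally, after $h$ rounds of Weisfeiler-Leman the colour $c^{(h)}$ resolves every distance not exceeding $2^{h}$. For $h=0$, condition (i) of a configuration separates vertex colours from edge colours (so $d=0$ is isolated); a nonempty edge colour labels an edge of $\Gamma_{\mathfrak{X}}$ and hence marks pairs at distance exactly $1$; and pairs carrying an empty edge colour must have distance $\geq 2$. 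So $c^{(0)}$ already partitions $\Gamma^{2}$ according to $d\in\{0\},\{1\},\{\geq 2\}$, matching the invariant since $2^{0}=1$.

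The inductive step is a midpoint-doubling argument. Assuming the invariant at $h$, suppose for contradiction that $c^{(h+1)}(v_{1},v_{2})=c^{(h+1)}(v_{1}',v_{2}')$, the distances differ, and at least one is $\leq 2^{h+1}$; after relabelling the two pairs we may take $D_{1}:=d(v_{1},v_{2})<d(v_{1}',v_{2}')=:D_{2}$ with $D_{1}\leq 2^{h+1}$. Pick $w$ on a geodesic from $v_{1}$ to $v_{2}$ with $d(v_{1},w)=\lfloor D_{1}/2\rfloor\leq 2^{h}$ and $d(w,v_{2})=\lceil D_{1}/2\rceil\leq 2^{h}$, and set $c_{1}^{*}:=c^{(h)}(v_{1},w)$, $c_{2}^{*}:=c^{(h)}(w,v_{2})$. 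By the inductive hypothesis each of these colours pins down its distance. Any $w'$ satisfying $c^{(h)}(v_{1}',w')=c_{1}^{*}$ and $c^{(h)}(w',v_{2}')=c_{2}^{*}$ would then force $d(v_{1}',v_{2}')\leq\lfloor D_{1}/2\rfloor+\lceil D_{1}/2\rceil=D_{1}<D_{2}$, impossible; so the count of such $w'$ is $0$, whereas the analogous count for $(v_{1},v_{2})$ is at least $1$, witnessed by $w$. By (\ref{eqwl}) this contradicts equality of the two $c^{(h+1)}$-colours.

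Once the lemma is in place, at step $H:=\lceil\log_{2}\mbox{diam}\,\Gamma_{\mathfrak{X}}\rceil$ every pair of distinct distances is resolved by $c^{(H)}$, and in particular $H\leq\log_{2}\mbox{diam}\,\Gamma_{\mathfrak{X}}+1$. The main obstacle I anticipate is then bounding by a small absolute constant the number of further refinement rounds that can still occur between step $H$ and the point at which the algorithm stabilises, so as to close the remaining gap of at most $2$ rounds. I expect to argue that, once all distances are resolved, any strict refinement must record new information in counts of the form $|\{w:(c^{(h)}(v_{1},w),c^{(h)}(w,v_{2}))=(a,b)\}|$, and that a further short application of the midpoint idea—now using that both factors are constrained by fully known distance information—shows that at most two such passes are possible before coherence is reached. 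Combining this with the ceiling slack produces the claimed bound $\log_{2}\mbox{diam}\,\Gamma_{\mathfrak{X}}+3$.
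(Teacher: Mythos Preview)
Your inductive lemma is correct and the midpoint-doubling proof of it is clean; in fact it is a consequence of the paper's own invariant (Lemma~\ref{Leknowwalk}), which asserts the stronger statement that $c^{(k)}(v_{1},v_{2})$ knows every \emph{walk} of length $\leq 2^{k}$ from $v_{1}$ to $v_{2}$, not merely the distance. The inductive mechanism (split a walk at a midpoint, use the count in (\ref{eqwl})) is the same in both cases.

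The genuine gap is in the endgame. Once $c^{(H)}$ resolves all distances, you have no argument that only a bounded number of further refinements can occur, and ``a further short application of the midpoint idea'' does not supply one: the midpoint idea has nothing left to say once every distance is already pinned down, while the colouring may continue to refine for reasons having nothing to do with distance. Two pairs at the same distance in $\Gamma_{\mathfrak X}$ can be non-automorphic in ${\mathfrak X}$ in arbitrarily complicated ways, and nothing in your invariant controls how many rounds Weisfeiler--Leman needs to separate them. This is precisely why the paper tracks the richer invariant: knowing all walks (each walk being the \emph{unique} one from its starting vertex with a given sequence of nonempty colours) lets one reconstruct the entire coloured graph $\Gamma_{\mathfrak X}$ around a vertex (Lemma~\ref{Lereconst}). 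From that, two more rounds suffice: one so that each vertex colour $c^{(k+1)}(v,v)$ encodes the full reconstruction around $v$, and one more so that each edge colour $c^{(k+2)}(v_{1},v_{2})$ inherits this; equal colours then force an automorphism of ${\mathfrak X}$, and Proposition~\ref{Prisowl} closes the argument.

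In short, your weaker distance-only invariant cannot carry the final two-round claim. To repair the proof you should strengthen the inductive statement to the walk-knowledge form and then supply an explicit reconstruction/automorphism argument as in Lemma~\ref{Lereconst}; merely knowing distances is not enough.
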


\begin{proof}[Proof that Thm~\ref{Thupperg} $\Rightarrow$ Thm~\ref{Thupper}]
Consider the configuration ${\mathfrak X}_{S}$ coming from the graph $\mbox{Sch}(V,S)$: the only possible empty colour is $\emptyset$ (hence the name) and all the other edge colours are nonempty, because for any $v\in V$ and $s\in S$ there is evidently only one $v'$ with $sv=v'$, so by our definition for any colour $c\subseteq S$, $c\neq\emptyset$ at most one $v'$ would realize $c(v,v')=c$ for any fixed $v$. If $\emptyset$ is indeed empty, by construction a nonempty-coloured pair in $\Gamma_{{\mathfrak X}_{S}}$ corresponds to an edge (or multiedge) in $\mbox{Sch}(V,S)$: thus connectedness of $\mbox{Sch}(V,S)$ implies the same for $\Gamma_{{\mathfrak X}_{S}}$, and the two have the same diameter. If $\emptyset$ is nonempty, i.e. in the extreme case when $S$ sends any $v$ to any $v'$ except at most one, we have $\mbox{diam }\Gamma_{{\mathfrak X}_{S}}=1$ and $\mbox{diam }\mbox{Sch}(V,S)=2$, so the bound still holds; if $\emptyset$ is not a colour at all, $\mbox{diam }\Gamma_{{\mathfrak X}_{S}}=\mbox{diam }\mbox{Sch}(V,S)=1$.
\end{proof}

From now on, for the sake of clarity, colours that appear during the iterations of the Weisfeiler-Leman algorithm as refinements of empty (resp. nonempty) colours are still called empty (resp. nonempty), even if some new empty colours could now satisfy the nonemptyness criterion: as a matter of fact, it is a key point in the success of the argument that eventually \textit{all} new colours will be nonempty according to our definition of the word; however our need to refer ourselves to the origin of the intermediate colours is more pressing than highlighting the acquisition of the property of nonemptyness. Observe that, by the construction of the new colours in (\ref{eqwl}), each of them knows every past colour from which it descended, so that recognizing whether a given intermediate colour is empty or nonempty does not create any problem.

In order to prove Theorem~\ref{Thupperg}, we need the next lemma. Inside ${\mathfrak X}$, call \textit{walk} (of length $l$) any sequence of consecutive pairs of vertices $(w_{0},w_{1}),(w_{1},w_{2}),\ldots,$ $(w_{l-1},w_{l})$ (with their respective colours) which corresponds to a walk of length $l$ in $\Gamma_{{\mathfrak X}}$; equivalently, a walk in ${\mathfrak X}$ is any sequence of consecutive pairs with only nonempty colours.

\begin{lemma}\label{Leknowwalk}
For every $v_{1},v_{2}\in \Gamma$, at the $k$-th iteration, $c^{(k)}(v_{1},v_{2})$ knows all walks of length $\leq 2^{k}$ from $v_{1}$ to $v_{2}$.
\end{lemma}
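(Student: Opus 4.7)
The plan is to prove the lemma by induction on $k$, with the natural interpretation that ``$c^{(k)}(v_{1},v_{2})$ knows all walks of length $\leq 2^{k}$'' means that for every length $l\leq 2^{k}$ and every colour sequence $(d_{1},\ldots,d_{l})$, the number of walks from $v_{1}$ to $v_{2}$ whose consecutive pairs carry the colours $d_{1},\ldots,d_{l}$ is determined by the colour $c^{(k)}(v_{1},v_{2})$ alone (and not by the individual choice of endpoints). The base case $k=0$ is immediate: a walk of length $1$ from $v_{1}$ to $v_{2}$ with specified colour $d$ exists iff $d=c(v_{1},v_{2})$ is nonempty, and this is encoded in $c^{(0)}(v_{1},v_{2})=c(v_{1},v_{2})$; walks of length $0$ are handled by condition (i) distinguishing vertex from edge colours.

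For the inductive step, assume $c^{(k)}$ knows walks of length $\leq 2^{k}$, and fix a walk length $l$ with $2\leq l\leq 2^{k+1}$. The key idea is halving: set $l_{1}=\lceil l/2\rceil$ and $l_{2}=\lfloor l/2\rfloor$, so that $l_{1}+l_{2}=l$ and $1\leq l_{1},l_{2}\leq 2^{k}$. Any walk $v_{1},w_{1},\ldots,w_{l-1},v_{2}$ with colour sequence $(d_{1},\ldots,d_{l})$ factors uniquely through its midpoint $w=w_{l_{1}}$ into a first half of colour sequence $(d_{1},\ldots,d_{l_{1}})$ ending at $w$ and a second half of colour sequence $(d_{l_{1}+1},\ldots,d_{l})$ starting at $w$.

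By the inductive hypothesis, the number of first halves depends only on $c^{(k)}(v_{1},w)$, call it $N_{1}(c^{(k)}(v_{1},w))$, and the number of second halves depends only on $c^{(k)}(w,v_{2})$, call it $N_{2}(c^{(k)}(w,v_{2}))$ (both depend on the chosen $d_{i}$'s, which are fixed). Summing over the midpoint and grouping by colour pair gives
\begin{equation*}
\#\{\text{walks}\}=\sum_{c_{1},c_{2}\in{\mathcal C}^{(k)}}N_{1}(c_{1})N_{2}(c_{2})\cdot\left|\{w\in\Gamma\,|\,c^{(k)}(v_{1},w)=c_{1},\,c^{(k)}(w,v_{2})=c_{2}\}\right|.
\end{equation*}
The cardinalities on the right are precisely the numbers packaged into $c^{(k+1)}(v_{1},v_{2})$ by formula (\ref{eqwl}), and $N_{1},N_{2}$ depend only on the colours $c_{1},c_{2}$ once the sequence $(d_{1},\ldots,d_{l})$ is fixed; therefore the whole sum is a function of $c^{(k+1)}(v_{1},v_{2})$. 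Walks of length $0$ and $1$ are still known because $c^{(k+1)}$ refines $c^{(0)}$.

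I do not expect any serious obstacle: the argument is essentially bookkeeping, and the only substantive point is choosing the split $l=l_{1}+l_{2}$ symmetrically so that \emph{both} halves lie within the inductive hypothesis — this is exactly what turns the ``$+1$'' in the WL iteration count into the ``doubling'' $2^{k}\mapsto 2^{k+1}$ in walk length, and is the mechanism that will ultimately produce the logarithmic bound in Theorem~\ref{Thupperg}.
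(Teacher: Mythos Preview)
Your proof is correct and follows the same induction-on-$k$ with midpoint-splitting argument as the paper's proof; you have simply made the bookkeeping more explicit by writing out the sum $\sum_{c_{1},c_{2}}N_{1}(c_{1})N_{2}(c_{2})\cdot|\{w:\ldots\}|$ and spelling out the interpretation of ``knows''. The paper gives the same proof in a more compressed form, noting that a walk of length $\leq 2^{k+1}$ splits at some $w$ into two walks of length $\leq 2^{k}$ whose existence is encoded in $c^{(k)}(v_{1},w)$ and $c^{(k)}(w,v_{2})$.
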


\begin{proof}
We proceed by induction on $k$. When $k=0$ the statement is trivial, since the only walk of length $1$ from $v_{1}$ to $v_{2}$ that could possibly exist is the edge $(v_{1},v_{2})$ provided that its colour is nonempty, which the colour $c^{(0)}(v_{1},v_{2})$ evidently knows.

Suppose that the statement has been proved for $k$, and consider any walk of length $\leq 2^{k+1}$ from $v_{1}$ to $v_{2}$: for any such walk, there exists a $w$ that splits the original walk into two walks (from $v_{1}$ to $w$ and from $w$ to $v_{2}$) of length $\leq 2^{k}$; the existence of these walks is an information contained inside $c^{(k)}(v_{1},w)$ and $c^{(k)}(w,v_{2})$ respectively, so at the $(k+1)$-th iteration $c^{(k+1)}(v_{1},v_{2})$ will know about the existence of this pair of walks (and consequently of the original long walk).
\end{proof}

Compare the statement of Lemma~\ref{Leknowwalk} with that of Proposition~\ref{Prwalkcol}: according to the latter, colours of a coherent configuration know all walks of any length, while the former describes how, iteration after iteration, the colours of a configuration arrive to gain knowledge of longer and longer walks, whose length doubles at every step. The origin of the $\log_{2}$ of the diameter in all our results clearly resides in this ``learning'' process; proving the lower and the upper bounds involves making sure that such process is, in a sense, more or less respectively necessary and sufficient to make the configuration ${\mathfrak X}$ coherent.

As for the upper bound, it turns out that, given a vertex $v$, knowing which walks starting from $v$ reach the same endpoint is enough information (in a graph like $\Gamma_{{\mathfrak X}}$) to reconstruct a piece of the graph around $v$.

\begin{lemma}\label{Lereconst}
Fix any vertex $v\in \Gamma_{{\mathfrak X}}$ and suppose that we know, for any two sequences of nonempty colours of length $\leq k$, whether the walks starting from $v$ defined by these sequences exist and have the same endpoint. Then it is possible to reconstruct in a unique way the subgraph $\Gamma^{(k)}(v)\subseteq\Gamma_{{\mathfrak X}}$ given by the edges at distance $\leq k$ from $v$; in other words, for any other coloured graph $\Gamma'$ whose walks of length $\leq k$ from a certain vertex $w$ satisfy the same conditions of existence and equality of endpoints, there exists a unique graph isomorphism $\Gamma^{(k)}(v)\rightarrow\Gamma'^{(k)}(w)$ sending $v$ to $w$.
\end{lemma}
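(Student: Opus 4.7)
The plan is to exploit the nonemptyness of edge colours to identify each vertex of $\Gamma^{(k)}(v)$ with an equivalence class of colour sequences. Since a nonempty colour $c$ at any vertex $v'$ admits at most one $w$ with $c(v',w)=c$, a sequence $(c_1,\ldots,c_j)$ of nonempty colours starting from $v$ determines at most one walk in $\Gamma_{\mathfrak{X}}$, and hence at most one endpoint; two sequences correspond to the same endpoint exactly when the given data asserts that the underlying walks both exist and coincide in their last vertex. This is precisely the information the lemma gives for free.

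First, I would let $W_k(v)$ denote the set of colour sequences of length $\leq k$ whose walk from $v$ is defined, and equip $W_k(v)$ with the equivalence relation $\sim$ of ``same endpoint''; both $W_k(v)$ and $\sim$ are fully determined by the hypothesised data. I would then take $W_k(v)/{\sim}$ as the reconstructed vertex set, with the class of the empty sequence as the image of $v$, and for each $j<k$ and each colour $c$ with $(c_1,\ldots,c_j,c)\in W_k(v)$ add a $c$-coloured edge from $[(c_1,\ldots,c_j)]$ to $[(c_1,\ldots,c_j,c)]$, together with its $c^{-1}$-reverse by property (ii) of the configuration. The map sending each class to the common endpoint of its walks is then a coloured-graph isomorphism onto $\Gamma^{(k)}(v)$, once one checks that every edge of $\Gamma^{(k)}(v)$ actually appears as the last step of some walk of length $\leq k$ from $v$.

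For the uniqueness of the isomorphism $\Gamma^{(k)}(v)\to\Gamma'^{(k)}(w)$ sending $v$ to $w$, I would observe that the same construction applied to $\Gamma'$ starting from $w$ produces, because the walk data match by hypothesis, the very same quotient $W_k(v)/{\sim}$; composing the two canonical identifications yields an isomorphism, and any isomorphism sending $v$ to $w$ is forced to send the endpoint in $\Gamma$ of a walk with sequence $(c_1,\ldots,c_j)$ to the endpoint in $\Gamma'$ of the corresponding walk, which pins the map down on every vertex and every edge.

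The main obstacle I expect is not the existence of the reconstruction, which is essentially bookkeeping, but rather delineating the edge set of $\Gamma^{(k)}(v)$ precisely enough for the argument to match the statement: an edge whose two endpoints both lie at distance exactly $k$ from $v$ is traversed only by walks of length $k+1$ and is therefore invisible to the given data, so I would read $\Gamma^{(k)}(v)$ as consisting of the edges traversable by some walk of length $\leq k$ starting from $v$ (equivalently, edges with at least one endpoint at distance $\leq k-1$). A small but important auxiliary check is that non-tree edges between two already-discovered vertices are correctly recovered, which is exactly what the equivalence relation $\sim$ is designed to encode.
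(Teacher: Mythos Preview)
Your proposal is correct and the plan is sound; the quotient-of-colour-sequences construction works, and your reading of $\Gamma^{(k)}(v)$ as the edges reachable as the last step of some walk of length $\leq k$ from $v$ (equivalently, edges with at least one endpoint at distance $\leq k-1$) is exactly the interpretation the paper uses implicitly. The well-definedness check you flag --- that extending two $\sim$-equivalent sequences by the same colour $c$ lands in the same class, provided the representatives have length $<k$ --- is the only genuine verification needed, and it follows immediately from the defining property of nonempty colours.

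The paper takes a different route: it argues by induction on $k$, with the base case $\Gamma^{(1)}(v)$ being a star, and at the inductive step extends an already-constructed isomorphism $\Gamma^{(k)}(v)\to\Gamma'^{(k)}(w)$ to the new edges ``at distance $k+1$'', splitting into the two cases (i) both endpoints at distance $k$ and (ii) one endpoint at distance $k$, the other at $k+1$. Your approach is more synthetic: you build the target object $W_k(v)/{\sim}$ in one stroke and then observe that both $\Gamma^{(k)}(v)$ and $\Gamma'^{(k)}(w)$ map to it canonically. The paper's inductive argument makes the role of the two edge-types explicit and perhaps reads more concretely; your construction is shorter and makes the uniqueness of the isomorphism more transparent, since any colour-preserving isomorphism sending $v$ to $w$ must respect endpoints of colour-walks and hence descends from the identity on $W_k(v)/{\sim}$. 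Either argument delivers the lemma.
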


\begin{proof}
The information provided to us is basically just a collection of strings of nonempty colours $c_{1}c_{2}...c_{l\leq k}$, meaning that  ``the walk starting from $v$ consisting of consecutive edges of colour $c_{1},c_{2},...,c_{l\leq k}$ exists'', and a collection of pairs of strings of nonempty colours $(c_{1}c_{2}...c_{l\leq k},c'_{1}c'_{2}...c'_{l'\leq k})$, meaning that ``the walk starting from $v$ consisting of consecutive edges of colour $c_{1},c_{2},...,c_{l\leq k}$ has the same endpoint as the walk starting from $v$ consisting of consecutive edges of colour $c'_{1},c'_{2},...,c'_{l'\leq k}$''. With these strings and pairs of strings in our hands, we will manage to rebuild what the graph $\Gamma_{{\mathfrak X}}$ looks like up to distance $k$.

We proceed by induction on $k$. For $k=1$, walks of length $\leq 1$ are just single edges: there is no possible equality of two endpoints, so $\Gamma^{(1)}(v)$ is just a star whose internal vertex is $v$ and whose leaves are all the edges $(v,v_{i})$ of $\Gamma_{{\mathfrak X}}$; there will obviously be then a unique isomorphism to any other star with internal vertex $w$ and leaves with the same colour as $\Gamma^{(1)}(v)$.

Suppose now that the statement is true for $k$, and consider for $k+1$ the two graphs $\Gamma^{(k+1)}(v),\Gamma'$: we already have a partial isomorphism from $\Gamma^{(k)}(v)\subseteq\Gamma^{(k+1)}(v)$ to the subgraph given by the edges at distance at most $k$ from $w$ in $\Gamma'$, and we just need to extend this isomorphism to the edges and vertices at distance $k+1$. Every relation among walks whose edges are already covered by walks of length $\leq k$ clearly does not endanger the already existing isomorphism, so we need to consider only relations involving a walk that includes an edge at distance $k+1$ from $v$ or $w$ (which will necessarily be the last edge).

Each edge at distance $k+1$ in $\Gamma^{(k+1)}(v)$ is $(v_{1},v_{2})$ where either $d(v,v_{1})=d(v,v_{2})=k$ or $d(v,v_{1})=k$ and $d(v,v_{2})=k+1$. In the first case, the two vertices $v_{1},v_{2}$ are already sent in a unique way to $w_{1},w_{2}\in \Gamma'$; there exist walks of length $k$ from $v$ to $v_{2}$, so for any string $c_{1}c_{2}...c_{k}c_{k+1}$ whose last edge would be $(v_{1},v_{2})$ in $\Gamma_{{\mathfrak X}}$ there are pairs of the type $(c_{1}c_{2}...c_{k}c_{k+1},c'_{1}c'_{2}...c'_{k})$: seeing now inside $\Gamma'$ the walks $c_{1}c_{2}...c_{k},c'_{1}c'_{2}...c'_{k}$ of length $k$, whose endpoints are already known to be $w_{1}$ and $w_{2}$, the pairs described above mean that ``the edge starting from $w_{1}$ of colour $c_{k+1}$ ends in $w_{2}$''. This implies that it is in fact possible to extend the isomorphism to the edge $(v_{1},v_{2})$ previously not covered by it, since this edge has the same colour as the edge $(w_{1},w_{2})$; in this way the original isomorphism is now extended to all edges of this type, and all relations among walks containing these edges now trivially agree with this isomorphism.

In the second case, the only relations that involve these type of edges are pairs of the form $(c_{1}c_{2}...c_{k}c_{k+1},c'_{1}c'_{2}...c'_{k}c'_{k+1})$; evidently it is possible to group these strings of length $k+1$ into equivalence classes corresponding to their endpoints $v_{2}$ inside $\Gamma^{(k+1)}(v) \setminus \Gamma^{(k)}(v)$. Consider now all strings $c_{1}^{i}c_{2}^{i}...c_{k}^{i}c_{k+1}^{i}$ inside a given class: their subwalks of length $k$ end in vertices $v_{1}^{i}\in \Gamma_{{\mathfrak X}}$, which uniquely correspond to vertices $w_{1}^{i}\in \Gamma'$; therefore the relations given by all pairs of $c_{1}^{i}c_{2}^{i}...c_{k}^{i}c_{k+1}^{i}$, when seen in $\Gamma'$, would signify that ``the edges starting from $w_{1}^{i}$ of colour $c_{k+1}^{i}$ all end in the same vertex'' (say $w_{2}$): we extend then the isomorphism by sending each $v_{2}$ to the corresponding $w_{2}$ and each $(v_{1}^{i},v_{2})$ to the corresponding $(w_{1}^{i},w_{2})$, thus covering all vertices at distance $k+1$ and all edges of this second type. The isomorphism has been extended in a unique way to the whole $\Gamma^{(k+1)}(v)$, so we are done.
\end{proof}

Notice how important it is that we are working with nonempty colours, i.e. in a situation where every vertex has at most one adjacent edge of any given colour: only with this condition we can talk about \textit{the} edge starting from $v$ and having colour $c$, or about \textit{the} walk starting from $v$ and having colours $c_{1},c_{2},...,c_{l}$; without it, there would be no guarantee that there is an isomorphism as the one we constructed, i.e. we would not be able to deduce the shape of the subgraph $\Gamma^{(k)}(v)$ only by looking at the information about the walks, because more than one graph could satisfy the same conditions: this is crucial in the proof of the upper bound.

We are now ready to prove Theorem~\ref{Thupperg}.

\begin{proof}[Proof of Thm~\ref{Thupperg}]
Define $k=\lceil\log_{2}(\mbox{diam }\Gamma_{{\mathfrak X}}+1)\rceil$: for any $v_{1},v_{2}\in \Gamma_{{\mathfrak X}}$, $c^{(k)}(v_{1},v_{2})$ knows all walks of length $\leq 2^{k}$ from $v_{1}$ to $v_{2}$ by Lemma~\ref{Leknowwalk}; since $d(v_{1},v_{2})\leq \mbox{diam }\Gamma_{{\mathfrak X}}<2^{k}$, there exists at least one of these walks: this implies that we have $c^{(k)}(v_{1},v_{2})\neq c^{(k)}(v_{1},v'_{2})$ for any $v_{2}\neq v'_{2}$, because the walk from a given vertex defined by a given sequence of nonempty colours is unique (if it exists).

At the next iteration, the colour $c^{(k+1)}(v_{1},v_{1})$ knows the number of vertices $w$ such that $(c^{(k)}(v_{1},w),c^{(k)}(w,v_{1}))=(c_{1},c_{2})$ for any choice of colours $c_{1},c_{2}\in {\mathcal C}^{(k)}$, so in particular it knows whether there is a colour $c_{1}$ containing one or two given walks (with the expression ``the colour $c_{1}$ contains a given walk'' we mean that two vertices $v,v'$ with $c(v,v')=c_{1}$ would have this walk going from $v$ to $v'$) such that there is one $w$ with $c^{(k)}(v_{1},w)=c_{1}$: in other words, the colour $c^{(k+1)}(v_{1},v_{1})$ knows if a walk starting from $v_{1}$ of length $\leq 2^{k}$ given by a certain sequence of nonempty colours exists, thanks to the information about colours containing one given walk, and it knows also if two walks starting from $v_{1}$ of length $\leq 2^{k}$ defined by given sequences of nonempty colours have the same endpoint, thanks to the information about colours containing two given walks; so we are in the situation described in the statement of Lemma~\ref{Lereconst}, where at the $(k+1)$-th iteration from the colour $c^{(k+1)}(v_{1},v_{1})$ we can reconstruct the subgraph $\Gamma_{{\mathfrak X}}^{(2^{k})}(v_{1})$.

After one more iteration, also each pair of distinct vertices $(v_{1},v_{2})$ will have a colour capable of reconstructing the subgraph $\Gamma_{{\mathfrak X}}^{(2^{k})}(v_{1})$, because $c^{(k+2)}(v_{1},v_{2})$ will in turn contain $c^{(k+1)}(v_{1},v_{1})$ in a way that allows us to identify it unequivocally: remember, in a configuration vertex colours and edge colours are distinct, so $c^{(k+1)}(v_{1},v_{1})$ can be defined as the unique vertex colour $c$ such that the number of $w$ with $\left(c^{(k+1)}(v_{1},w),c^{(k+1)}(w,v_{2})\right)=\left(c,c^{(k+1)}(v_{1},v_{2})\right)$ is nonzero.

Now, $\Gamma_{{\mathfrak X}}^{(2^{k})}(v_{1})$ is actually the whole $\Gamma_{{\mathfrak X}}$ for any $v_{1}$, since every edge is at distance at most $\mbox{diam }\Gamma_{{\mathfrak X}}+1$ from $v_{1}$: this means that at the $(k+2)$-th iteration we can look at any colour of any vertex or edge and reconstruct the whole graph $\Gamma_{{\mathfrak X}}$ around this vertex or edge. But then, if two vertices or two edges have still the same colour after $k+2$ iterations, there is an automorphism of $\Gamma_{{\mathfrak X}}$ sending one into the other.

We still have to check that this extends to an automorphism of ${\mathfrak X}$. We remark that this passage is not necessary to prove the special cases ${\mathfrak X}_{C},{\mathfrak X}_{S}$ that we are interested in, since in that case there is at most one empty colour $\emptyset$ and any automorphism preserving all nonempty colours (i.e. an automorphism of $\Gamma_{{\mathfrak X}}$) would preserve all colours (i.e. it would be an automorphism of ${\mathfrak X}$).

For any pair $(v_{1},v_{2})$ and any $c(v,w)$ empty, there is a sequence of consecutive pairs that goes first through a walk from $v_{1}$ to $v$, then includes the pair $(v,w)$, then through another walk goes from $w$ to $v_{2}$; if $c^{(k+2)}(v_{1},v_{2})=c^{(k+2)}(v'_{1},v'_{2})$, the resulting automorphism of $\Gamma_{{\mathfrak X}}$ sends the walk $v_{1}\rightarrow v$ to a walk $v'_{1}\rightarrow v'$ (and analogously for the second walk), and all these walks can be chosen to be at most of length $\mbox{diam }\Gamma_{{\mathfrak X}}<2^{k}$. The colour $c^{(k)}(v_{1},v)$ knows the walk $v_{1}\rightarrow v$, therefore $c^{(k+1)}(v_{1},w)$ knows the colour of the whole sequence composed by $v_{1}\rightarrow v$ and the pair $(v,w)$; on the other side, $c^{(k)}(w,v_{2})$ knows the walk $w\rightarrow v_{2}$, so that $c^{(k+2)}(v_{1},v_{2})$ knows the whole long sequence $v_{1}\rightarrow v\rightarrow w\rightarrow v_{2}$: $c^{(k+2)}(v_{1},v_{2})=c^{(k+2)}(v'_{1},v'_{2})$ then implies $c(v,w)=c(v',w')$ (as always, we have repeatedly used the uniqueness of nonempty-coloured walks from a given vertex: otherwise we would have not been able only from colours to identify $(v',w')$ as the image of $(v,w)$). We have extended the automorphism of $\Gamma_{{\mathfrak X}}$ to the empty colours, i.e. it is in fact an automorphism of the whole configuration ${\mathfrak X}$.

By Proposition~\ref{Prisowl}, the existence of this automorphism means that if two pairs $(v_{1},v_{2}),(v'_{1},v'_{2})$ have the same colour at the $(k+2)$-th iteration they will always have the same colour, i.e.:

\begin{equation*}
WL({\mathfrak X})\leq k+2\leq \log_{2}\mbox{{\normalfont diam }}\Gamma_{{\mathfrak X}}+3
\end{equation*}

\end{proof}

\section{The lower bound}

We now prove the lower bound in Theorem~\ref{ThlowerS}; again, we prove the same for a more general class of configurations than the ones given by Cayley and Schreier graphs.

\begin{theorem}\label{Thlowerg}
Let ${\mathfrak X}$ be a configuration with only one empty colour, and let $\Gamma_{{\mathfrak X}}$ be defined as in Theorem~\ref{Thupperg}; suppose that there exists a $\varphi\in\mbox{\normalfont Aut}({\mathfrak X}),\varphi\neq\mbox{\normalfont Id}_{{\mathfrak X}}$ with the property that each nontrivial $\varphi^{i}\in\langle\varphi\rangle$ has no fixed points, and consider such a $\varphi$ with minimal $|\langle\varphi\rangle|$. Then the number $WL({\mathfrak X})$ of nontrivial iterations of the Weisfeiler-Leman algorithm satisfies:
\begin{equation*}
WL({\mathfrak X})\geq \log_{2}\mbox{{\normalfont diam }}\Gamma_{{\mathfrak X}}-\log_{2}(|\langle\varphi\rangle|-1)-3
\end{equation*}
\end{theorem}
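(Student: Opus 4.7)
The strategy is the mirror image of the upper bound: while Lemma~\ref{Leknowwalk} says that $c^{(h)}$ gains knowledge of walks of length $\leq 2^{h}$ per iteration, I would argue here that $c^{(h)}$ cannot discriminate between certain carefully chosen pairs until $2^{h}$ is comparable to $D := \mbox{diam }\Gamma_{{\mathfrak X}}$. Setting $m = |\langle\varphi\rangle|$ and $k = \lfloor \log_{2} D - \log_{2}(m-1)\rfloor - 3$, the plan is to exhibit two pairs $(v_{1},v_{2}),(v_{1}',v_{2}')$ with $c^{(k)}(v_{1},v_{2}) = c^{(k)}(v_{1}',v_{2}')$ but $d(v_{1},v_{2}) \neq d(v_{1}',v_{2}')$. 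Such pairs force $WL({\mathfrak X}) > k$, because by Proposition~\ref{Prwalkcol} the colour of a pair in a coherent configuration determines the counts of walks of every length, hence in particular the distance between its endpoints.

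The two pairs will be produced by a double $\varphi$-shift applied to a single ``base'' pair: $(v_{1}',v_{2}') = (\varphi^{a}v_{1}, \varphi^{b}v_{2})$ with $a \neq b$ in $\mathbb{Z}/m\mathbb{Z}$. The ``shift distances'' $d_{i} := d(v,\varphi^{i}v)$ are independent of $v$ because $\varphi \in \mbox{Aut}({\mathfrak X})$, and the triangle inequality gives $\bigl|d(v_{1},v_{2}) - d(\varphi^{a}v_{1},\varphi^{b}v_{2})\bigr| \leq d_{b-a}$, so an appropriate choice of $a,b$ around a pair realising the diameter produces two distances that both stay large yet differ. The appearance of the correction $-\log_{2}(m-1)$ in the final bound should come precisely from the pigeonhole cost of locating such shifted pairs: considering all $m-1$ nontrivial shifts of a diameter-realising endpoint, the shift producing the largest distance change can be kept $\gtrsim D/(m-1)$ away from the original distance, provided one starts from a judiciously chosen base pair, and the minimality of $|\langle\varphi\rangle|$ is what guarantees no finer partition of this $\varphi$-action is available to tighten the estimate.

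The technical core is an inductive equality claim: for a threshold function $r_{h}$ doubling at each step, one would prove
\begin{equation*}
c^{(h)}(v_{1},v_{2}) = c^{(h)}(\varphi^{a}v_{1},\varphi^{b}v_{2}) \quad \text{whenever both distances exceed } r_{h}
\end{equation*}
for all $v_{1},v_{2}\in\Gamma$ and $a,b\in\mathbb{Z}/m\mathbb{Z}$. The base case $h=0$ is immediate, since by hypothesis there is only one empty colour. In the inductive step, the refinement formula~(\ref{eqwl}) reduces the equality to matching, for every pair of colours $(c_{1},c_{2}) \in ({\mathcal C}^{(h)})^{2}$, the number of intermediate vertices $w$ witnessing $(c_{1},c_{2})$ for the two endpoint pairs; this matching is obtained by partitioning the $w$'s into those close to $v_{1}$ (distance $\leq r_{h}$), those close to $v_{2}$, and those far from both, and using the bijection $w\mapsto\varphi^{a}w$ on the first class and $w\mapsto\varphi^{b}w$ on the second, with either choice working on the third. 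On each class, $\varphi$-invariance of WL (Proposition~\ref{Prisowl}) handles one of the two colour components exactly, while the induction hypothesis applied to a pair of the form $(w,v_{2})$ or $(v_{1},w)$ with the residual shift handles the other.

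The main obstacle will lie in the boundary bookkeeping: the bijection $w\mapsto\varphi^{a}w$ perturbs the distance from $v_{2}$ by up to $d_{a}$, so the induction hypothesis applied to $(w,v_{2})$ after the shift requires a slightly larger safety margin than $r_{h}$; this forces the recursion $r_{h+1} \geq 2r_{h} + \max_{i}d_{i}$, which solves to $r_{h} \lesssim 2^{h+1}\max_{i}d_{i}$. Combining this with the pigeonhole estimate of the second paragraph—$\max_{i}d_{i} \lesssim D/(m-1)$ in the sense that the base pair can always be chosen so the relevant shift distance falls in this range—the condition $r_{k} < D$, needed to guarantee the existence of the target pairs, becomes $2^{k} \lesssim D/(m-1)$, exactly the threshold in the statement. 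The constant $-3$ absorbs the small inductive buffer and the loss in the pigeonhole step, closing the argument.
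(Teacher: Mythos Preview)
Your overall strategy---show that $c^{(h)}$ cannot separate $\varphi$-related pairs until $2^{h}$ is comparable to the diameter, then invoke Proposition~\ref{Prwalkcol}---matches the paper's. But the inductive lemma you propose, and the bijection meant to prove it, do not work as stated.

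The gap is in the third class. On the set of $w$ far from both $v_{1}$ and $v_{2}$, the map $w\mapsto\varphi^{a}w$ sends the complement of $B(v_{1},r_{h})\cup B(v_{2},r_{h})$ to the complement of $B(\varphi^{a}v_{1},r_{h})\cup B(\varphi^{a}v_{2},r_{h})$, not of $B(\varphi^{a}v_{1},r_{h})\cup B(\varphi^{b}v_{2},r_{h})$; so your piecewise map is not a bijection of $\Gamma$. Even pointwise, neither $\varphi^{a}w$ nor $\varphi^{b}w$ need satisfy the required colour equality: if both $d(w,\varphi^{b-a}v_{2})\leq r_{h}$ and $d(\varphi^{a-b}v_{1},w)\leq r_{h}$, the induction hypothesis applies to neither choice, and your hypothesis (which constrains only $d(v_{1},v_{2})$ and $d(v_{1},\varphi^{b-a}v_{2})$, not $d(v_{1},\varphi^{2(b-a)}v_{2})$) does not rule this out. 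The difficulty is therefore structural, not boundary bookkeeping; the proposed $\max_{i}d_{i}$ buffer does not cure it, and in any case $d_{i}=d(v,\varphi^{i}v)$ is constant only along each $\varphi$-orbit, not on all of $\Gamma$, with no a~priori reason to be $\lesssim D/(m-1)$.

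The paper repairs this by \emph{strengthening} the inductive hypothesis: require $d(v,\varphi^{i}(w))>2^{k}$ for \emph{all} $i$ simultaneously (Lemma~\ref{Lew-wequal}). The partition is then the $\varphi$-invariant set $A^{(k)}=\{v':\min_{i}d(\varphi^{i}(v),v')\leq 2^{k}\}$, so that $\sigma_{i}=\mathrm{id}$ on $A^{(k)}$ and $\sigma_{i}=\varphi^{i}$ off it is a genuine bijection and the threshold stays exactly $2^{k}$ with no correction term. The factor $m-1$ enters only in the final counting: once WL has stabilised at step $k$, every $w$ has some $\varphi^{i}(w)$ within $2^{k}$ of a fixed $v$, and chaining at most $m-1$ such hops bounds the diameter by $2^{k+2}(m-1)$.
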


\begin{proof}[Proof that Thm~\ref{Thlowerg} $\Rightarrow$ Thm~\ref{ThlowerS}]
As we already said in the previous section, the only possible empty colour is $\emptyset$; the extreme cases of $\emptyset$ nonempty and $\emptyset$ not a colour give a trivially true bound in Theorem~\ref{ThlowerS} since $\mbox{diam Sch}(G,S)=2$ and $1$ respectively, while when $\emptyset$ is empty we have $\mbox{diam }\Gamma_{{\mathfrak X}_{S}}=\mbox{diam Sch}(G,S)$. As for the automorphism condition, if $q>2$ the elements $h\in {\mathbb F}_{q}\setminus\{0,1\}$ induce automorphisms $\varphi_{h}$ defined by $h(v_{1},...,v_{k})=(hv_{1},...,hv_{k})$: they are really automorphisms since $Av=v' \Leftrightarrow Ahv=hv'$ and they obviously do not fix any point since $hv\neq v$ for $h\neq 1$.  The multiplicative group ${\mathbb F}_{q}^{*}$ is just a cycle of order $q-1$, so there will be an element of order $p$ as defined in the statement of Theorem~\ref{ThlowerS}, and $p$ will provide an upper bound for $\mbox{min}_{\varphi}\{|\langle\varphi\rangle|\}$.
\end{proof}

Again, notice that Theorem~\ref{Thlowerg} would also apply to Cayley graphs: the same reasoning applies with regard to the colour $\emptyset$, and right multiplication by any $h\in G$ gives an automorphism $\varphi_{h}$ that behaves as required.

The key idea to prove Theorem~\ref{Thlowerg} is the following: for any $\varphi$ with the property described in the statement, the various pairs $(v,\varphi^{i}(w))$ despite being distinct from one another will all have the same colour until we manage to ``cover the distance'' between $v$ and at least one of the $\varphi^{i}(w)$, i.e. to encode inside the colours information about walks of length $d(v,\varphi^{i}(w))$. This is what we meant when we said that the process of bestowing upon our colours knowledge of the walks was necessary in order to reach coherence through Weisfeiler-Leman: we prove that, since the surroundings of the $\varphi^{i}(w)$ are indistinguishable, from the point of view of $v$ we will not be able to differentiate among them until we manage to touch at least one of them.

This idea translates to the following result.

\begin{lemma}\label{Lew-wequal}
For any pair of vertices $v,w\in \Gamma_{{\mathfrak X}}$ and for any integer $k\geq 0$ such that $d(v,\varphi^{i}(w))>2^{k}$ for all $i$, we have that $c^{(k)}(v,\varphi^{i}(w))$ is the same for all $i$.
\end{lemma}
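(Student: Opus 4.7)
I would argue by induction on $k$. The base case $k=0$ is immediate: $d(v,\varphi^i(w)) > 1$ for all $i$ forbids any edge between $v$ and $\varphi^i(w)$, so $c^{(0)}(v,\varphi^i(w))$ is the unique empty colour of ${\mathfrak X}$ for every $i$.

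For the inductive step, assume the statement at level $k$ and that $d(v,\varphi^i(w)) > 2^{k+1}$ for all $i$. The Weisfeiler-Leman refinement (\ref{eqwl}) splits $c^{(k+1)}(v,\varphi^i(w))$ into $c^{(k)}(v,\varphi^i(w))$ --- already $i$-independent by the inductive hypothesis, since $2^{k+1} > 2^k$ --- and, for each pair $(c_1,c_2)$ of colours at level $k$, the count
\[
N_i(c_1,c_2) = \left|\{u \in \Gamma : c^{(k)}(v,u) = c_1,\ c^{(k)}(u,\varphi^i(w)) = c_2\}\right|.
\]
Everything then reduces to showing that each $N_i(c_1,c_2)$ does not depend on $i$. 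I would partition $\Gamma$ into $\langle\varphi\rangle$-orbits (all of full size $|\langle\varphi\rangle|$, by the fixed-point-free hypothesis on $\varphi$) and argue orbit by orbit.

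Fix an orbit $O$ with representative $u_0$. If $d(v,\varphi^n(u_0)) > 2^k$ for every $n$, the inductive hypothesis applied to $(v,u_0)$ makes $c^{(k)}(v,\cdot)$ constant on $O$; the contribution of $O$ to $N_i(c_1,c_2)$ then vanishes unless that common value is $c_1$, in which case it equals $|\{n : c^{(k)}(\varphi^n(u_0),\varphi^i(w)) = c_2\}|$, which after applying the automorphism $\varphi^{-i}$ becomes the reindexed sum $|\{m : c^{(k)}(\varphi^m(u_0),w) = c_2\}|$ and is manifestly independent of $i$. Otherwise some $n_0$ yields $d(v,\varphi^{n_0}(u_0)) \leq 2^k$; the triangle inequality, combined with the fact that every $\varphi^n$ is an isometry of $\Gamma_{{\mathfrak X}}$, then forces $d(\varphi^m(u_0),w) > 2^k$ for every $m$, because a failure would yield $d(v,\varphi^{n_0-m}(w)) \leq 2^{k+1}$, contradicting the hypothesis. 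The inductive hypothesis on $(u_0,w)$ then makes $c^{(k)}(u,\varphi^i(w))$ constant in both $u \in O$ and $i$, so the contribution of $O$ to $N_i(c_1,c_2)$ vanishes unless that common value is $c_2$, in which case it equals $|\{u \in O : c^{(k)}(v,u) = c_1\}|$, again with no $i$-dependence. Summing over orbits gives the inductive conclusion.

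The step I expect to be the main obstacle is the very count equality $N_i = N_j$: the naive bijection $u \mapsto \varphi^{j-i}(u)$ does not send the set counted by $N_i$ to the one counted by $N_j$, because $\varphi^{j-i}$ also shifts $v$, and $v$ is not in the orbit being iterated. The orbit-partition argument circumvents this precisely because the gap $d(v,\varphi^\ell(w)) > 2^{k+1}$ produces a clean dichotomy --- each $\langle\varphi\rangle$-orbit is uniformly far from $v$ or uniformly far from $w$ --- and this is exactly what lets the inductive hypothesis freeze one of the two colour constraints, after which the remaining count turns out to be $i$-invariant for trivial reasons.
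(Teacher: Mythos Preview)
Your argument is correct and follows the same inductive scheme as the paper, with the same dichotomy (each $\langle\varphi\rangle$-orbit is either within $2^{k}$ of some $\varphi^{j}(v)$ or not) and the same two ingredients in each case (inductive hypothesis on the ``far'' coordinate, automorphism property on the other). The only cosmetic difference is that the paper packages the equality $N_{0}=N_{i}$ as an explicit bijection $\sigma_{i}$ (identity on the orbits close to $v$, the map $\varphi^{i}$ on the rest), whereas you compute each orbit's contribution to $N_{i}$ directly and observe it is $i$-independent.
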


\begin{proof}
We proceed by induction on $k$. For $k=0$ the statement is obvious, since by hypothesis $d(v,\varphi^{i}(w))>1$ for all $i$ and $c^{(0)}$ is the original colouring of ${\mathfrak X}$, so that $c^{(0)}(v,\varphi^{i}(w))=\emptyset$ for each of these pairs, where $\emptyset$ is the unique empty colour.

Now suppose that the statement is true for $k$, i.e. for every two elements $v,w\in \Gamma_{{\mathfrak X}}$ such that $\forall i \left(d(v,\varphi^{i}(w))>2^{k}\right)$ the colour $c^{(k)}(v,\varphi^{i}(w))$ is the same for all $i$; we fix now elements $v,w$ that satisfy the condition $\forall i \left(d(v,\varphi^{i}(w))>2^{k+1}\right)$ and we prove for them the statement for $k+1$.

The colour $c^{(k+1)}(v,w)$ consists of the previous colour $c^{(k)}(v,w)$ and of the number of $v'$ such that $\left(c^{(k)}(v,v'),c^{(k)}(v',w)\right)=(c_{1},c_{2})$ for each choice of $c_{1},c_{2}\in {\mathcal C}^{(k)}$: since by hypothesis we already have the same $c^{(k)}(v,\varphi^{i}(w))$ for all $i$, we only need to prove that for each pair $(c_{1},c_{2})$ the numbers are the same for all $(v,\varphi^{i}(w))$, i.e. for each $i$ there exists a bijection $\sigma_{i}:V\rightarrow V$ such that:

\begin{equation}\label{eqbije}
\left(c^{(k)}(v,v'),c^{(k)}(v',w)\right)=\left(c^{(k)}(v,\sigma_{i}(v')),c^{(k)}(\sigma_{i}(v'),\varphi^{i}(w))\right)
\end{equation}

Define $A^{(k)}=\left\{v'\in V|\min_{i}d(\varphi^{i}(v),v')\leq 2^{k}\right\}$: we show that the bijections then can be defined to be:

\begin{equation*}
\sigma_{i}(v')=\begin{cases}
 v' & \mbox{ \ if } v'\in A^{(k)} \\
 \varphi^{i}(v') & \mbox{ \ if } v'\not\in A^{(k)}
\end{cases}
\end{equation*}

These are really bijections, since the fact that $\varphi\in\mbox{Aut}({\mathfrak X})$ preserves distances implies $v'\not\in A^{(k)}\Leftrightarrow \varphi^{i}(v')\not\in A^{(k)}$. Figure~1 shows what happens in our situation: since $w$ is at least $2^{k+1}$ far from all the $\varphi^{i}(v)$ and the walks that the colours know have length at most $2^{k}$, either we are close to one of the $\varphi^{i}(v)$ and then far away from $w$ (which allows us to ``confuse'' $w$ and $\varphi^{i}(w)$ from the point of view of $v'$) or we are far away from the $\varphi^{i}(v)$ (and we can confuse $v'$ and $\varphi^{i}(v')$ from the point of view of $v$).

\begin{figure}
\includegraphics[width=\textwidth]{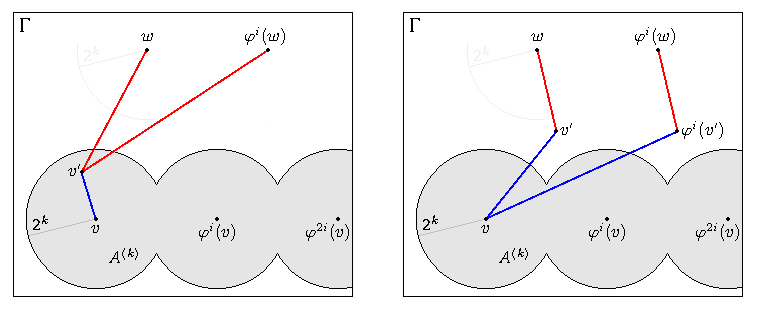}
\caption{Colours $c^{(k)}$ for the two cases  $v'\in A^{(k)}$ and  $v'\not\in A^{(k)}$.}
\end{figure}

When $v'\in A^{(k)}$, the first components in (\ref{eqbije}) are already identical. By definition, there is a $j$ such that $d(\varphi^{j}(v),v')\leq 2^{k}$; then using $d(\varphi^{j}(v),\varphi^{i}(w))=d(v,\varphi^{i-j}(w))$ we have:

\begin{equation*}
d(v',\varphi^{i}(w))\geq d(\varphi^{j}(v),\varphi^{i}(w))-d(\varphi^{j}(v),v')>2^{k+1}-2^{k}=2^{k}
\end{equation*}
thus obtaining equality for the second components in (\ref{eqbije}) by inductive hypothesis.

When $v'\not\in A^{(k)}$, we have $d(v,\varphi^{i}(v'))=d(\varphi^{-i}(v),v')>2^{k}$ for all $i$ by definition: this gives us already $c^{(k)}(v,v')=c^{(k)}(v,\varphi^{i}(v'))$ by inductive hypothesis, and the first components in (\ref{eqbije}) are equal; equality of the second components is also clear, because $\varphi^{i}$ is an automorphism of ${\mathfrak X}$ that sends $(v',w)$ to $(\varphi^{i}(v'),\varphi^{i}(w))$, which means that their colours will always be equal to each other at every iteration (as consequence of Proposition~\ref{Prisowl}).

We have thus shown that the $\sigma_{i}$ are bijections satisfying (\ref{eqbije}): this means that the colours $c^{(k+1)}(v,\varphi^{i}(w))$ are all the same for every $i$, proving the inductive step.
\end{proof}

It is now easy to prove Theorem~\ref{Thlowerg} from this lemma.

\begin{proof}[Proof of Thm~\ref{Thlowerg}]
Suppose that we have reached the end of the Weisfeiler-Leman algorithm, i.e. we have reached an iteration $k$ that does not refine any colour in ${\mathcal C}^{(k-1)}$. For any two elements $v,w_{1}\in \Gamma_{{\mathfrak X}}$ there is a walk of nonempty colours from $v$ to $w_{1}$ that is unique to $w_{1}$ (by definition of nonempty colour) in the sense that no other $w_{2}$ will have the same walk going from $v$ to $w_{2}$; this means in particular that the final colours $c^{(k)}(v,\varphi^{i}(w))$ must be all distinct for fixed $v,w$, now that we have reached the end of Weisfeiler-Leman and the configuration has become coherent: in fact for each of them there is a walk that is not shared by the others, so by Proposition~\ref{Prwalkcol} these walks will give different colours to the pairs $(v,\varphi^{i}(w))$. By Lemma~\ref{Lew-wequal} however these pairs have all the same colour when $\forall i\left(d(v,\varphi^{i}(w))>2^{k}\right)$, which implies that for any fixed $v\in \Gamma_{{\mathfrak X}}$ and for any other $w\in \Gamma_{{\mathfrak X}}$ there exists an $i$ such that $\varphi^{i}(w)$ has distance at most $2^{k}$ from $v$.

We now suppose that the cyclic group generated by $\varphi$ in $\mbox{Aut}({\mathfrak X})$ is of minimal size, the only property of $\varphi$ required in the statement that we have not used up to this point. Fix now a $v$ such that $\mbox{diam }\Gamma_{{\mathfrak X}}$ is realized as a distance by some pair $(v,v')$, and fix a $w$ such that $d(v,w)=2^{k}+1$ (of course we are supposing that the diameter is larger than $2^{k}$, otherwise the bound of the theorem is already true): we must have $d(v,\varphi^{i}(w))\leq 2^{k}$ for some $i$, so changing the name of the generator of this shortest cycle $\langle\varphi\rangle$ we can suppose that $d(v,\varphi(w))\leq 2^{k}$. For a $v'$ such that $d(v,v')=\mbox{diam }\Gamma_{{\mathfrak X}}$ we must also have $d(v,\varphi^{j}(v'))\leq 2^{k}$ for some $j$; therefore:

\begin{eqnarray*}
\mbox{diam }\Gamma_{{\mathfrak X}} & = & d(v,v') \ \ = \ \ d(\varphi^{j}(v'),\varphi^{j}(v)) \ \ \leq \\
 & \leq & d(\varphi^{j}(v'),v)+\sum_{i=1}^{j}\left(d(\varphi^{i-1}(v),\varphi^{i}(w))+d(\varphi^{i}(w),\varphi^{i}(v))\right) \ \ =  \\
 & = & d(v,\varphi^{j}(v'))+j(d(v,\varphi(w))+d(v,w)) \ \ \leq \\
 & \leq & 2^{k}+j(2^{k}+2^{k}+1) \ \ \leq \ \ 2^{k+2}j \ \ \leq \ \ 2^{k+2}(|\langle\varphi\rangle|-1)
\end{eqnarray*}

and we have the bound on the last nontrivial iteration $k-1$:

\begin{equation*}
 WL({\mathfrak X}) \ = \ k-1 \ \geq \ \log_{2}\mbox{diam }\Gamma_{{\mathfrak X}}-\log_{2}(|\langle\varphi\rangle|-1)-3
\end{equation*}

\end{proof}

\section{The case of Cayley graphs}
Now we prove Theorem~\ref{ThCay}. The case of a Cayley graph $\mbox{{\normalfont Cay}}(G,S)$ is particularly nice with respect to general Schreier graphs: the reason behind this fact is ultimately the existence of automorphisms (namely, right multiplications by elements of $G$) that can send any $g$ to any $g'$, as we will be able to observe during the proof of the result.

We start with a lemma that provides an upper bound for the number of iterations.

\begin{lemma}\label{LeupperC}
The right hand side of the equality in Theorem~\ref{ThCay} provides a sufficient number of iterations to make the configuration ${\mathfrak X}_{C}$ coherent (i.e. the inequality with $\leq$ in Theorem~\ref{ThCay} holds).
\end{lemma}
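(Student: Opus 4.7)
The plan is to exploit the symmetry of Cayley graphs (vertex-transitivity via right multiplication) to obtain a cleaner upper bound than that of Theorem~\ref{Thupperg}. For any $h\in G$, right multiplication $g\mapsto gh$ is an automorphism of $\mbox{Cay}(G,S)$ and hence of ${\mathfrak X}_{C}$, since the colour $c(g_{1},g_{2})=g_{2}g_{1}^{-1}$ is invariant under the replacement $(g_{1},g_{2})\mapsto(g_{1}h,g_{2}h)$; by Proposition~\ref{Prisowl} this remains an automorphism of every ${\mathfrak X}_{C}^{(k)}$, so $c^{(k)}(g_{1},g_{2})$ depends only on $h=g_{2}g_{1}^{-1}$, and I encode the colouring as a map $\tilde{c}^{(k)}\colon G\to{\mathcal C}^{(k)}$, $\tilde{c}^{(k)}(h):=c^{(k)}(e,h)$. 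The key preliminary observation is that once $\tilde{c}^{(k)}$ is injective, ${\mathfrak X}_{C}^{(k)}$ is automatically coherent: given colours $c_{0},c_{1},c_{2}$ corresponding to unique elements $h_{0},h_{1},h_{2}\in G$, the conditions $c^{(k)}(g_{1},w)=c_{1}$ and $c^{(k)}(w,g_{2})=c_{2}$ force $w=h_{1}g_{1}$ and $h_{0}=h_{2}h_{1}$, so the count $\gamma$ in condition (iii) is $1$ or $0$ depending only on whether $h_{0}=h_{2}h_{1}$, independently of $(g_{1},g_{2})$.

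Next I invoke Lemma~\ref{Leknowwalk} together with the fact that, in a Cayley graph, a walk from $e$ of consecutive edge colours $(s_{1},\ldots,s_{l})\in S^{l}$ reaches the unique endpoint $s_{l}\cdots s_{1}\in G$. Hence at iteration $k$ the walks of length $\leq 2^{k}$ from $e$ to $h$ encoded by $\tilde{c}^{(k)}(h)$ are in bijection with the factorizations $h=s_{l}\cdots s_{1}$ with $s_{i}\in S$ and $l\leq 2^{k}$; two distinct elements $h\neq h'$ give disjoint sets of such factorizations, so that $\tilde{c}^{(k)}(h)=\tilde{c}^{(k)}(h')$ forces both factorization sets to be empty, i.e.\ $d(e,h)>2^{k}$ and $d(e,h')>2^{k}$ simultaneously.

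The result now follows by setting $D:=\mbox{diam Cay}(G,S)$ and treating the two cases. In the ``otherwise'' case, choose $k=\lceil\log_{2}D\rceil$; then $d(e,h)\leq D\leq 2^{k}$ for every $h\in G$, so $\tilde{c}^{(k)}$ is injective, ${\mathfrak X}_{C}^{(k)}$ is coherent, and $WL({\mathfrak X}_{C})\leq\lceil\log_{2}D\rceil$. In the ``unique'' case, let $h_{\max}$ be the sole element with $d(e,h_{\max})=D$ (using vertex-transitivity to translate the condition of the theorem to a condition at $e$), and choose $k=\lceil\log_{2}(D-1)\rceil$; then $d(e,h)\leq 2^{k}$ for every $h\neq h_{\max}$, while $h_{\max}$ is the unique element whose set of length-$\leq 2^{k}$ walks from $e$ is empty, so its colour remains distinct from everyone else's. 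Again $\tilde{c}^{(k)}$ is injective and $WL({\mathfrak X}_{C})\leq\lceil\log_{2}(D-1)\rceil$. The main subtle point is the injectivity-implies-coherence reduction of the first paragraph, which leans crucially on the multiplicative structure of Cayley graphs: it is precisely what allows us to drop the ``$+3$'' correction of Theorem~\ref{Thupperg} (whose proof must separately reconstruct neighbourhoods around each vertex, then extend to edges, then to empty colours), and it is ultimately what makes Theorem~\ref{ThCay} sharper than Theorem~\ref{Thupper}.
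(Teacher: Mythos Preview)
Your proof is correct and follows essentially the same approach as the paper: both use right multiplication to reduce the colouring to a function $\tilde{c}^{(k)}$ of $g_{2}g_{1}^{-1}$, invoke Lemma~\ref{Leknowwalk} to show that $\tilde{c}^{(k)}$ becomes injective once $2^{k}$ reaches the diameter (or $D-1$ in the unique case), and conclude that the algorithm has stopped. The one small difference is that you verify coherence directly from injectivity by checking condition~(iii), whereas the paper argues that injectivity is the maximal possible refinement (via Proposition~\ref{Prwalkcol}) and hence the algorithm can go no further; your route is slightly more self-contained but not substantively different.
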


\begin{proof}
Right multiplication by any $h\in G$ gives an automorphism $\varphi_{h}$ of ${\mathfrak X}_{C}$, since $sg=g' \Leftrightarrow sgh=g'h$ for any $g,g'\in G$, $s\in S$; by Proposition~\ref{Prisowl}, at the end of the algorithm $\varphi_{h}$ will still be an automorphism of the final coherent configuration, which means in particular that every vertex will have the same colour and that for any three vertices $g,g',h\in G$ there exists an $h'\in G$ such that $c(g,h)=c(g',h')$. Therefore, the maximum possible colour refinement that we can expect to obtain from Weisfeiler-Leman is the one where all sets $\left\{c(g,h)|h\in G\right\}$ are equal for every $g$ but where any two colours $c(g,h),c(g,h')$ are distinct for $h\neq h'$.

Indeed, this is the colouring that we reach at the end: as we have already observed in the general Schreier case, any coloured walk from $g$ to $h$ (corresponding to a certain product of generators that represents $g^{-1}h$) is the unique walk from $g$ consisting of that sequence of colours, and by Proposition~\ref{Prwalkcol} the final colour $c(g,h)$ knows it. Thus, every $c(g,h)$ knows a walk that all other $c(g,h')$ do not know, and the colouring described above is achieved.

When every colour $c(g,h)$ for all $h\in G$ has become distinct from the others, we have undoubtedly reached the end of the algorithm; this happens when the colour of every pair $(g,h)$ knows at least one walk connecting them, or at the very least when for a fixed $g$ all but one of them know such a walk (so that the remaining pair $(g,h)$ has the unique colour that knows no walk: this corresponds, informally speaking, to the situation where even the emptiest descendant of the colour $\emptyset$ is nonempty according to the definition given in Theorem~\ref{Thupperg}). By Lemma~\ref{Leknowwalk}, this happens when at the $k$-th iteration we have $2^{k}$ at least as large as the diameter, or at least $\mbox{{\normalfont diam Cay}}(G,S)-1$ if for any $g$ there is only one $h$ whose distance from $g$ is the diameter; the result follows.
\end{proof}

To prove an inequality in the other direction, we make use of the abundance of automorphisms in the Cayley graph to prove a stronger version of Lemma~\ref{Lew-wequal}.

\begin{lemma}\label{Lehh'equal}
For any four vertices $g,h,g',h'\in G$ and for any integer $k\geq 0$ such that $d(g,h),d(g',h')>2^{k}$, we have $c^{(k)}(g,h)=c^{(k)}(g',h')$.
\end{lemma}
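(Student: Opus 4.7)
The plan is to prove this by induction on $k$, paralleling Lemma~\ref{Lew-wequal} but exploiting the much richer automorphism group of a Cayley graph: right multiplication by any element of $G$ is an automorphism of ${\mathfrak X}_{C}$ (as already noted in the proof of Lemma~\ref{LeupperC}), so by Proposition~\ref{Prisowl} the refined colour $c^{(k)}(g_{1},g_{2})$ depends only on the quotient $g_{2}g_{1}^{-1}$. The base case $k=0$ is immediate, since $d(g,h),d(g',h')>1$ forces both colours to be the unique empty colour $\emptyset$.

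For the inductive step, given $g,h,g',h'$ with $d(g,h),d(g',h')>2^{k+1}$, the inductive hypothesis already gives $c^{(k)}(g,h)=c^{(k)}(g',h')$; the real work is to match, for every $(c_{1},c_{2})\in({\mathcal C}^{(k)})^{2}$, the count $|\{v:c^{(k)}(g,v)=c_{1},\,c^{(k)}(v,h)=c_{2}\}|$ with its analogue for $(g',h')$. The strategy is to exhibit an explicit bijection $\sigma:G\to G$ satisfying $c^{(k)}(g,v)=c^{(k)}(g',\sigma(v))$ and $c^{(k)}(v,h)=c^{(k)}(\sigma(v),h')$ for every $v$, built piecewise from right multiplications. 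Partition $G$ into the $2^{k}$-ball $A$ around $g$, the $2^{k}$-ball $B$ around $h$, and the complement $C$ (and analogously $A'\sqcup B'\sqcup C'$ around $g',h'$); the inequality $d(g,h)>2^{k+1}$ guarantees $A\cap B=\emptyset$. Define $\sigma$ to be right multiplication by $g^{-1}g'$ on $A$, right multiplication by $h^{-1}h'$ on $B$, and any bijection $C\to C'$ (the cardinalities agree because right-invariance of the distance makes ball sizes depend only on the radius). On $A$ the first colour identity is automatic because right multiplication is a $k$-step automorphism, and the second follows from the inductive hypothesis since both $(v,h)$ and $(\sigma(v),h')$ have distance $>2^{k}$; on $B$ the two roles swap; on $C$ all four pairs in sight have distance $>2^{k}$, so every colour involved equals the ``universal far colour'' supplied by the inductive hypothesis and any bijection works.

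The step I expect to require the most care is verifying that this piecewise definition actually assembles into a bijection of $G$, i.e.\ that $\sigma(A)=A'$ and $\sigma(B)=B'$. The first equality is immediate from right-invariance of the distance; the second is the subtle point, because it requires a vertex close to $h$ and far from $g$ to be sent to a vertex still far from $g'$. This is where the strict inequality $d(g,h)>2^{k+1}$ (rather than merely $>2^{k}$) is essential: for $v\in B$ one has $d(g',\sigma(v))\geq d(g',h')-d(\sigma(v),h')>2^{k+1}-2^{k}=2^{k}$, an estimate that would fail if the hypothesis on the distances were weakened. Once the partition-compatibility of $\sigma$ is secured, the two colour identities on each piece reduce to routine applications of right-multiplication-invariance on one side and of the inductive hypothesis on the other, closing the induction.
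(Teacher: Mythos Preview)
Your proof is correct and follows the same inductive template as the paper's, but with a cleaner decomposition. The paper first fixes the \emph{first} coordinate and shows $c^{(k+1)}(g,h)=c^{(k+1)}(g,h')$ whenever $d(g,h),d(g,h')>2^{k+1}$, partitioning $G$ according to distances from $h$ and $h'$ (identity on the far region, right multiplication by $h^{-1}h'$ on the $2^{k}$-ball around $h$, and an arbitrary bijection between the leftover annular pieces); only afterward does it invoke the right-multiplication automorphisms to pass from $(g,\cdot)$ to $(g',\cdot)$. You instead handle both coordinates at once, partitioning $G$ into the $2^{k}$-balls around $g$ and $h$ and their common complement, and using a different right translation on each ball. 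Your version is a bit more symmetric and avoids the two-stage argument; the paper's version has the mild advantage that the bijection it builds only needs to match colours of the form $c^{(k)}(g,\cdot)$ and $c^{(k)}(\cdot,h)$ against colours with the \emph{same} first coordinate $g$, which makes the use of the inductive hypothesis fractionally lighter. Either way the engine is identical: right multiplication preserves $c^{(k)}$ on the ball where it is applied, and the strict inequality $>2^{k+1}$ forces the remaining pair to have distance $>2^{k}$, so the inductive hypothesis handles it.

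One small remark: the step you flag as ``the most care'' --- that $\sigma(B)=B'$ --- is actually automatic from distance-invariance of right multiplication (it sends the $2^{k}$-ball around $h$ bijectively to the $2^{k}$-ball around $h'$, and $A'\cap B'=\emptyset$ since $d(g',h')>2^{k+1}$). The place where the strict $>2^{k+1}$ hypothesis is genuinely needed is not the bijectivity of $\sigma$ but the colour identity $c^{(k)}(g,v)=c^{(k)}(g',\sigma(v))$ for $v\in B$, exactly the triangle-inequality estimate you wrote down.
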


\begin{proof}
Again, we proceed by induction on $k$. For $k=0$ the statement is obvious, because all pairs $(g,h)$ of vertices with distance $>1$ have the same colour $\emptyset$ at the $0$-th step.

Now suppose that the statement is true for $k$. First, we are going to prove that for any three vertices $g,h,h'\in G$ with $d(g,h),d(g,h')>2^{k+1}$ the two pairs $(g,h),(g,h')$ will still have the same colour at the $(k+1)$-th step; the idea is, as in Lemma~\ref{Lew-wequal}, to construct a suitable bijection $\sigma:G\rightarrow G$ with a property analogous to (\ref{eqbije}), namely:

\begin{equation}\label{eqbijeC}
\left(c^{(k)}(g,g'),c^{(k)}(g',h)\right)=\left(c^{(k)}(g,\sigma(g')),c^{(k)}(\sigma(g'),h')\right)
\end{equation}

Define:

\begin{equation*}
\sigma(g')=\begin{cases}
 g' & \mbox{ \ if } d(g',h),d(g',h')>2^{k} \\
 g'h^{-1}h' & \mbox{ \ if } d(g',h)\leq 2^{k} \\
 \tau(g') & \mbox{ \ if } d(g',h)>2^{k},d(g',h')\leq 2^{k}
\end{cases}
\end{equation*}

where $\tau$ is an arbitrary bijection from the set $\{g'\in G|d(g',h)>2^{k},d(g',h')\leq 2^{k}\}$ to the set $\{g'\in G|d(g',h)\leq 2^{k},d(g',h')>2^{k}\}$.

In the first case we have obviously a bijection, whose image is the set of all vertices of distance $>2^{k}$ from both $h$ and $h'$; also, by inductive hypothesis in this set we have $c^{(k)}(g',h)=c^{(k)}(g',h')$, so (\ref{eqbijeC}) is satisfied. In the second case, right multiplication by $h^{-1}h'$ is an automorphism (hence a bijection) from the ball around $h$ to the one around $h'$ both of radius $2^{k}$: this descends trivially from the fact that if we have $s_{i}\in S$ such that $\left(\prod_{i}s_{i}\right)g'=h$ then also $\left(\prod_{i}s_{i}\right)g'h^{-1}h'=h'$, so that in particular $d(g',h)=d(g'h^{-1}h',h')$; moreover for the same reason we must have $c^{(k)}(g',h)=c^{(k)}(g'h^{-1}h',h')$, because for every walk given by a sequence of colours $s_{i}$ from $g'$ to $h$ the same walk exists from $g'h^{-1}h'$ to $h'$: as observed in the proof of Lemma~\ref{LeupperC}, for our three vertices $g',h,g'h^{-1}h'$ there must be a fourth $x$ that will have $c(g',h)=c(g'h^{-1}h',x)$ at the end, and $x=h'$ is the only possible candidate. We also have $c^{(k)}(g,g')=c^{(k)}(g,g'h^{-1}h')$ by inductive hypothesis, since $g'$ and $g'h^{-1}h'$ are at distance $\leq 2^{k}$ from $h$ and $h'$ (both at distance $>2^{k+1}$ from $g$); thus, (\ref{eqbijeC}) is satisfied again. In the third case, $\tau$ is a bijection because the balls around $h'$ and $h$ of radius $2^{k}$ have the same number of vertices, and domain and codomain of $\tau$ are these two balls minus their intersection; in addition, the colours of the four pairs $(g,g'),(g',h),(g,\tau(g')),(\tau(g'),h')$ are all the same since each of their distances is $>2^{k}$, so we have (\ref{eqbijeC}) for this case too. Given that the codomains in the three cases are disjoint, $\sigma$ is indeed a bijection satisfying (\ref{eqbijeC}), which implies that $c^{(k+1)}(g,h)=c^{(k+1)}(g,h')$.

Now we know that for any vertex $g$ there is a colour $c_{g}$ such that all vertices $h$ at distance $>2^{k+1}$ will have $c^{(k+1)}(g,h)=c_{g}$: but then it is obvious that $c_{g}$ does not depend on $g$, since any $g$ is sent to any $g'$ by some automorphism that will preserve distances in the graph, so that $(g,h)$ of distance $>2^{k+1}$ is sent to some $(g',h')$ of same distance (and consequently $c_{g}=c_{g'}$). This proves that $c^{(k+1)}(g,h)=c^{(k+1)}(g',h')$ whenever $d(g,h),d(g',h')>2^{k+1}$, concludes the inductive step and proves the lemma.
\end{proof}

Now we can easily prove Theorem~\ref{ThCay}.

\begin{proof}[Proof of Thm~\ref{ThCay}]
We have already shown the $\leq$ direction in Lemma~\ref{LeupperC}. On the other hand, proving the $\geq$ direction means proving that if at the $k$-th iteration there are three vertices $g,h,h'$ such that $d(g,h),d(g,h')>2^{k}$ then there are more iterations to come; by Lemma~\ref{Lehh'equal}, however, in this situation $c^{(k)}(g,h)=c^{(k)}(g,h')$ and we know that at the end of the algorithm we will have $c(g,h)\neq c(g,h')$ (ultimately coming from Proposition~\ref{Prwalkcol}), so the statement holds.
\end{proof}

\section{Concluding remarks}

The Weisfeiler-Leman algorithm behaves especially well on Cayley graphs for the following reason: as pointed out by L. Bartholdi (personal communication), since at the $k$-th iteration the vertices $g'$ at distance $\leq 2^{k}$ from $g$ have pairs $(g,g')$ all of distinct colours while the $g'$ at distance $>2^{k}$ have pairs $(g,g')$ all of the same colour, the configuration obtained after the $k$-th step starting from $\mbox{Cay}(G,S)$ is the same as the configuration coming from $\mbox{Cay}(G,S^{2^{k}})$ itself; thus it is natural to expect that the algorithm would stop when $S^{2^{k}}$ becomes the whole $G$ (or possibly $G$ except for just one $g$).

Schreier graphs, which in general have not as many automorphisms as Cayley graphs, satisfy less stringent upper and lower bounds. In one direction, having vertices with different stabilizers can make pairs of vertices of large distance receive different colours early on, so that the colouring at the $k$-th step of the algorithm conveys more information than the mere colouring coming from the choice of $S^{2^{k}}$ as generators. Consider for example the set $V={\mathbb Z}/n{\mathbb Z}$ and the group $G=\mbox{Sym}(n)$ generated by the set $S$ consisting of all transpositions of the type $(i \ i+1)$ and the identity: the diameter of $\mbox{Sch}(V,S)$ is $\lfloor \frac{n}{2} \rfloor$, but since every $i\in V$ is stabilized by a different subset of the generators (i.e. all of them except $(i-1 \ i)$ and $(i \ i+1)$) after the first step all the pairs of colour $\emptyset$ are differentiated immediately; therefore the number of iterations in this case will be $1$, for any choice of $n$ (the bound given by Theorem~\ref{Thlowerg} also fails, because there are no non-trivial automorphisms of the coloured graph).

On the other hand, the fact that the maximum possible colour refinement that we can expect from Weisfeiler-Leman is more than the one described during the proof of Lemma~\ref{LeupperC} could mean that more steps are necessary than just the ones needed to reach the end of the graph: the information to reconstruct the whole graph (to put it in the language of Lemma~\ref{Lereconst}) exists already but it could be scattered among the various pairs of vertices of the graph and it could take a few more steps to make sure that every single pair knows everything about the graph. Consider for example the set $V=\{1,2,\ldots,14\}$ and the set $S=\{e,\sigma^{\pm 1},\tau^{\pm 1}\}$ (the group $G$ has little importance here: for the sake of simplicity, think of it as the free group $F_{2}$, or as a suitable subgroup of $\mbox{Sym}(14)$) acting on $V$ as follows:

\begin{eqnarray*}
\sigma: & & 1 \mapsto 2 \mapsto 3 \mapsto 4 \mapsto 5 \mapsto 6 \mapsto 7 \mapsto 1, \\
 & & 8 \mapsto 9 \mapsto 10 \mapsto 11 \mapsto 12 \mapsto 13 \mapsto 14 \mapsto 8 \\
\tau: & & 1 \mapsto 8 \mapsto 9 \mapsto 2 \mapsto 1, \ 3 \mapsto 10 \mapsto 3, \ 4 \mapsto 11 \mapsto 4, \\
 & & 5 \mapsto 12 \mapsto 5, \ 6 \mapsto 13 \mapsto 6, \ 7 \mapsto 14 \mapsto 7
\end{eqnarray*}

In this situation, $\mbox{Sch}(V,S)$ looks like two coloured heptagons whose corresponding vertices are linked, so that its diameter is $4$; from the reasoning in Lemma~\ref{Leknowwalk} and Lemma~\ref{Lereconst}, after the second iteration of the algorithm there is enough information to reconstruct the whole graph, and this would be in accord with a hypothetical estimate as in Theorem~\ref{ThCay}. Nevertheless, it is possible to verify that we have $c^{(2)}(5,5)=c^{(2)}(12,12)$ and $c^{(3)}(5,5)\neq c^{(3)}(12,12)$, so that the number of iterations for this configuration is $>2$ (it is $3$ indeed).

\begin{center}
***
\end{center}

Theorems~\ref{Thupper}-\ref{ThlowerS}-\ref{ThCay} establish a rather strong correlation between the number of iterations of Weisfeiler-Leman and the diameter of Cayley and Schreier graphs. In particular, Theorem~\ref{ThCay} allows us to describe the diameter of Cayley graphs as a function of $WL({\mathfrak X_{C}})$; it is natural, in the context of Babai's conjecture, to ask ourselves whether it is possible that the number of Weisfeiler-Leman iterations could be reflected in another way in the construction of the graph: usually, determining the runtime of the algorithm would involve from the beginning the actual construction of the graph, thus making it useless for the solution of the conjecture. In light of this, it would be interesting to find results that express $WL({\mathfrak X_{C}})$ as a more intrinsic feature of the construction of Cayley graphs.

On the other side, to the best of our knowledge the results established here are the first ones that determine nontrivial bounds for the number of iterations of the Weisfeiler-Leman algorithm on configurations, either general or of a specific form (the trivial bound on a generic classical configuration being $|\Gamma|^{2}-|{\mathcal C}|$). In this direction, it would be interesting to find results in the style of Theorems~\ref{Thupperg}-\ref{Thlowerg} with different initial conditions: a case that appears to be particularly appealing is the case of non-coloured graphs, for which one can wonder whether it could be possible to bound $WL({\mathfrak X})$ from above by some function of the diameter of the graph, as we have done here for the particular coloured graphs described in the statement of Theorem~\ref{Thupperg}.

\section*{Acknowledgements}

The author thanks H. A. Helfgott for introducing him to both of the research areas touched by the results of this paper.

\end{document}